\newtheorem{thm}{Theorem}[section]
\newtheorem{lemma}[thm]{Lemma}
\theoremstyle{definition}
\newtheorem{Definicion}[thm]{Definition}
\newtheorem{example}[thm]{Example}
\newtheorem{cor}{Corollary}
\newtheorem{prop}{Proposition}
\theoremstyle{remark}
\newtheorem{remark}[thm]{Remark}
\numberwithin{equation}{section}
\title{Fourier analysis  with generalized integration}
\author{ Juan H. Arredondo, M. Guadalupe Morales, Manuel Bernal G.}
\date{}
\begin{document}
\maketitle
\begin{abstract}
 We generalize the classic  Fourier  transform operator $\mathcal{F}_{p}$ by using the Henstock-Kurzweil integral theory.  It is shown that the operator equals the $HK$-Fourier transform  on a  dense subspace of $\mathcal{ L}^p$, $1<p\leq 2$. In particular, a theoretical scope of this representation is raised to approximate  numerically the Fourier transform of functions on the mentioned subspace. Besides, we show differentiability of the Fourier transform function $\mathcal{F}_{p}(f)$ under more general conditions than in Lebesgue's theory.
 \medskip

{\it MSC 2010\/}: Primary 35S30,  26A39, 41A35, 34A25, 26A45

                  Secondary 42B10, 26A42, 97N50, 26B30, 58C05.

 \smallskip

{\it Key Words and Phrases}:   Fourier transform,  Henstock-Kurzweil integral, $HK$-Fourier transform, $HK$-Sine and $HK$-Cosine  Fourier transform, bounded variation function,  $\mathcal{L}^p$ spaces.

\end{abstract}

\section{Introduction}\label{intro}
The Lebesgue integral  has strong implications in Fourier Analysis. Integration theory had an important development in the last half-century. Thereby, with the introduction of new integration theories, the possibility  to extend fundamental results arises, allowing  new and better numerical approaches. For example, the Henstock-Kurzweil integral  contains Riemann, improper Riemann and Lebesgue integrals with  the values of the integrals  coinciding \cite{Swartz}. Thus, in \cite{M.Guadalupe}  was proved that for  subsets of $p$-integrable functions, $1<p\leq 2$, the classical Fourier transform can be represented as a Henstock-Kurzweil integral for any $s\neq 0$. 
Moreover, this representation allows to analyze more properties related  to the Fourier transform, as continuity or asymptotic behavior. 
On the other hand,  Fourier Analysis  is related  to Approximation theory. Important applications are based on integration theory \cite{Ruzhansky}.  

\medskip

 In  \cite{Liflyand} the Fourier transform is studied  for functions in $BV_0(\mathbb{R})$ whose
derivative lies in the Hardy space  $H^1(\mathbb{R}) \subsetneq \mathcal{L}^1(\mathbb{R})$, but not dense in it. This operator is known as the Fourier–Stieltjes transform.  In  \cite{Ivashev-Musatov}, \cite{Liflyand0} - \cite{Liflyand3}, \cite{Wiener} the  Fourier–Stieltjes transform was analyzed  obtaining asymptotic formulas and integrability for the Fourier Cosine and Sine transforms of such kind of functions.

\medskip

In this work we study the Fourier transform over subsets of $ \mathcal{L}^{p}(\mathbb{R})\setminus\mathcal{L}^{1}(\mathbb{R})$ when  $1<p\leq 2$.  Some classical results for the Fourier transform will be extended on subspaces of $BV_0(\mathbb{R})$ not contained in $\mathcal{L}^{1}(\mathbb{R})$.  
We will prove that  the classical Fourier transform $\mathcal{F}_{p}(f)$  equals the $HK$-Fourier transform,  $\mathcal{F}_{HK}(f)$, for $1<p\leq 2$. Thus, $\mathcal{F}_{p}(f)$ keeps the continuity property a.e. and asymptotic behavior. Moreover, the possibility to obtain a numerical approximation of $\mathcal{F}_{p}(f)$ is shown, via the Henstock-Kurzweil integral. In the last section, we will show that  the Fourier transform on $\mathcal{L}^p(\mathbb{R}) $ is differentiable under more general conditions implied by those in Lebesgue's theory  \cite[Theorem 9.2]{Rudin}.




\section{Preliminaries} \label{sec:1}
We follow the notation from \cite{Bartle}  to introduce basic definitions of the Henstock-Kurzweil integral.
Let $\mathbb{\overline{R}}=\mathbb{R}\cup\lbrace\pm \infty\rbrace$ and  $[a,b]$ be a non-degenerative  interval in $\mathbb{\overline{R}}$. A partition  $P$ of  $[a,b]$  is a finite collection of  non-overlapping  intervals  such that 
\begin{equation*}
[a,b]=I_{1}\cup I_2\cup \ldots \cup I_{n}.
\end{equation*}  
Specifically, the partition itself is the set of endpoints of each  sub-interval $I_i$.
\begin{equation*}
a=x_{0}\leq x_{1}\leq x_{2}\leq \ldots \leq x_{n}=b,
\end{equation*}
where  
\begin{equation*}
\begin{array}{crl}
I_{i}:=[x_{i-1},x_{i}] &  for  & i=1,\ldots,n,
\end{array}
\end{equation*} Observe that $[a,b]$ can be unbounded.


\begin{Definicion}
A tagged partition of  $I$ 
\begin{equation*}
\dot{P}=\left\lbrace(I_{i},t_{i})\right\rbrace_{i=1}^{n}.
\end{equation*}
is a finite set of ordered pairs  $\left\lbrace(I_{i},t_{i} )\right\rbrace_{i=1}^{n}$, where the collection of subintervals $\{I_{i}\}$ forms a partition of $I$ and the point $t_{i}\in I_i$ is called a tag of $I_i$.
\end{Definicion}


\begin{Definicion}
A map $\delta:[a,b]\rightarrow (0,\infty)$ is  called gauge function on $[a,b]$. Given a gauge function $\delta$ on $[a,b]$ it is said that a tagged partition 
$\dot{P}=\left\lbrace([x_{i-1}, x_{i}]; t_i)\right\rbrace_{i=1}^{n}$ of $[a,b]$ is $\delta$-fine according to the following cases:\\
For $a \in \mathbb{R}$ and $b=\infty$: \begin{enumerate}
  \item $a=x_{0},b=x_{n}=t_{n}=\infty$.
  \item $[x_{i-1}, x_{i}]\subset [t_{i}-\delta (t_{i}), t_{i} + \delta(t_{i})]$, for all $i=1,2,\ldots  ,n-1$.
  \item $[x_{n-1},\infty]\subset [\frac{1}{\delta(t_{n})},\infty]$. 
\end{enumerate}
For $a=-\infty$ and $b\in \mathbb{R}$:
\begin{enumerate}
  \item $a=x_{0}=t_{1}=-\infty$, $b=x_{n}$.
  \item $[x_{i-1}, x_{i}]\subset[t_{i}-\delta (t_{i}), t_{i} + \delta(t_{i})]$, for all $i=2,\ldots,n$.
  \item $[-\infty,x_{1}]\subset [-\infty,-\frac{1}{\delta(t_{1})}]$. 
\end{enumerate}
For $a=-\infty$ and $b=\infty$
\begin{enumerate}
  \item $a=x_{0}=t_{1}=-\infty$, $b=x_{n}=t_{n}=\infty$.
  \item $[x_{i-1}, x_{i}]\subset [t_{i}-\delta (t_{i}), t_{i} + \delta(t_{i})]$, for all $i=2,\ldots,  n-1$.
  \item $[x_{n-1},\infty]\subset [\frac{1}{\delta(t_{n})},\infty]$ and $[-\infty,x_{1}]\subset [-\infty,-\frac{1}{\delta(t_{1})}]$. 
\end{enumerate}
For $a,b\in \mathbb{R}$:
\begin{enumerate}
\item $[x_{i-1},x_{i}]\subset [t_{i}-\delta (t_{i}), t_{i} + \delta(t_{i})]$, for all $i=1,2,\ldots,n.$
\end{enumerate}
\end{Definicion}

According to the convention concerning to the `` arithmetic " in $\overline{\mathbb{R}}$, $0\cdot(\pm\infty)=0$, a real-valued function  $f$ defined over $\mathbb{R}$ can be  extended by setting $f(\pm\infty)=0$. Thus, it is  introduced the definition of the  Henstock-Kurzweil integral over intervals in $\overline{\mathbb{R}}$.

\begin{Definicion}
Let $[a,b]$ be an interval in $\overline{\mathbb{R}}$. The real-valued function $f$ defined over $I$ is said to be  Henstock-Kurzweil integrable on $[a,b]$ iff  there exists $A \in\mathbb{R}$ such that for every $\epsilon > 0$ there exists a gauge function $\delta_{\epsilon}$ over $[a,b]$, such that if  $\dot{P}= \left\lbrace([x_{i-1}, x_{i}]; t_i)\right\rbrace_{i=1}^{n}$ is a $\delta_{\epsilon}$-fine   partition of $[a,b]$, then
\begin{equation*}
\left| \sum_{i=1}^{n}f(t_{i})(x_{i}-x_{i-1})-A\right|<\epsilon.
\end{equation*}
The number $A$ is the integral of  $f$ over $[a,b]$ and it is denoted by $\int_{a}^{b}f =A$. 
\end{Definicion}
The set of all Henstock-Kurzweil integrable functions on the interval $I$ is denoted by $ HK(I)$, and the set of Henstock-Kurzweil integrable functions over each compact interval is denoted by $HK_{loc}(\mathbb{R})$.
The integrals will be in the Henstock-Kurzweil sense, if not specified.

 The  Multiplier Theorem in \cite{Bartle} states that the bounded variation functions are the multipliers of the  Henstock-Kurzweil integrable functions. Moreover, this concept is related to the Riemann-Stieltjes integral, which  generalizes the Riemann integral and also it is useful to calculate the Fourier transform, see Theorem \ref{spaceLoc}  below. There exist several versions of the  Riemann-Stieltjes integral. In this work it is considered the Riemann-Stieljes ($\delta$)-integral, also called norm Riemann-Stieltjes integral \cite{Mc,Monteiro}. Here for simplicity, it is called as Riemann-Stieltjes integral.


\medskip

The set of bounded variation functions over $I\subseteq \mathbb{R}$ is denoted by $BV(I)$, 
 and  $BV_{0}(\mathbb{R})$ denotes the functions in $BV(\mathbb{R})$  vanishing  at infinity, \cite{Liflyand,Mendoza}.

\subsection{The Fourier transform operator in the classical sense} \label{sec:2}
We will enunciate basic results about the Fourier  Analysis in the classical sense, i.e. conside\-ring Lebesgue's integral.
\begin{Definicion}\label{normaLP}
Let $1\leq p<\infty$. For any Lebesgue measurable function $f:\mathbb{R}\rightarrow \mathbb{R}$ we define
\begin{equation*}
\Vert f \Vert_{p}=\left[\int_{\mathbb{R}} |f|^{p}d\mu \right]^{\frac{1}{p}}.
\end{equation*}

\end{Definicion}
For each $p\geq 1$, the set of functions $f$ such that $\Vert f \Vert_{p}<\infty$ (called $p$-integrable functions) is a normed space (considering  equivalence classes respect to $\Vert \cdot \Vert_{p}$) and is denoted by $\mathcal{L}^{p}(\mathbb{R} )$.


The Fourier transform has been developed in the context of the Lebesgue theory and over the spaces $\mathcal{L}^{p}(\mathbb{R})$, with important implications in different areas, e.g. optics, signal theory, statistics, probability theory \cite{Bracewell}.


The set $\mathcal{L}^{1}(\mathbb{R})$ is well known as the space of Lebesgue integrable functions on $\mathbb{R}$ or absolutely integrable functions.  $\mathcal{L}^{2}(\mathbb{R})$ is a normed space with an inner product  that provides algebraic and geometric techniques applicable to spaces of arbitrary dimension.  These spaces are usually  considered  to define the  Fourier transform \cite{Grafakos,Reed, Rudin}.

\begin{Definicion}\label{TransClas}
Let $f \in \mathcal{L}^1(\mathbb{R})$. The Fourier transform  of $f$ at the point $s$  is defined as 
\begin{equation*}
\mathcal{F}_{1}(f)(s)=\frac{1}{\sqrt{2\pi}}\int_{-\infty}^{\infty}e^{-is x} f(x)dx.
\end{equation*}

\end{Definicion}

It is well known that $\mathcal{F}_{1}(f)$ is pointwise defined and by the Riemann-Lebesgue Lemma,  $\mathcal{F}_{1}(f)$ belongs to $C_\infty(\mathbb{R})$, the set of complex-valued continuous functions on $\mathbb{R}$  vanishing at $\pm\infty$  \cite{M.Guadalupe, Reed}.
On the other hand, the space $\mathcal{L}^2(\mathbb{R})$ is not contained in $\mathcal{L}^{1}(\mathbb {R})$. Thus, the operator $\mathcal{F}_{1}$ is not well defined over $\mathcal{L}^2(\mathbb{R})$. Nevertheless, the Fourier transform on  $\mathcal{L}^{2}(\mathbb{R})$ is given as an extension of the Fourier transform $\mathcal{F}_{1}(f)$ initially defined on  
 $\mathcal{L}^{1}(\mathbb {R})\cap\mathcal{ L}^{2}(\mathbb{R})$. It means that the Fourier transform over $\mathcal{L}^2(\mathbb{R})$ is defined as a limit  \cite{Hilbert}, and is denoted as $\mathcal{F}_2$.

\medskip

\begin{Definicion}[\cite{Grafakos, Pinsky,Reed}] The Fourier transform operator  in $\mathcal{L}^p(\mathbb{R})$, $1<p<2$, is given as $$\mathcal{F}_p:\mathcal{L}^p(\mathbb{R})\rightarrow\mathcal{L}^q(\mathbb{R})$$
$$\mathcal{F}_p(f)=\mathcal{F}_1(f_1)+\mathcal{F}_2(f_2),$$
where $p^{-1}+q^{-1}=1$, $f_1\in\mathcal{L}^1(\mathbb{R})\cap\mathcal{L}^p(\mathbb{R})$, $f_2\in\mathcal{L}^2(\mathbb{R})\cap\mathcal{L}^p(\mathbb{R})$ and $f=f_1+f_2$.
\end{Definicion}

\section{ $HK$-Fourier transform}  \label{sec:3}

At the beginning of this century, the  Fourier theory was developed using  the  Henstock-Kurzweil theory. In \cite{Erick}, E. Talvila   showed some existence theorems and continuity of the   Fourier transform over Henstock-Kurzweil integrable functions. Moreover, the  Fourier transform has been studied as a Henstock-Kurzweil integral over non-classical spaces of functions \cite{Francisco, M.Guadalupe}.
It is well known that if $I$ is a compact interval, then
\begin{equation*}
 BV(I)\subset \mathcal{L}^{1}(I)\subset HK(I).
 \end{equation*}
 However, when $I$ is an unbounded interval, 
\begin{equation*}\label{Conjunto1}
BV(I)\nsubseteq \mathcal{L}^1(I)
\end{equation*}
 and
\begin{equation*}\label{Conjunto2}
\mathcal{L}^{1}(I)\nsubseteq HK(I)\cap BV(I).
\end{equation*}

 Thereby, when $I$ is unbounded, there is no inclusion relation between $\mathcal{L}^{1}(I)$ and $HK(I)\cap BV(I)$. On the other hand,  $BV(I) \cap HK(I) \subset \mathcal{L}^{2}(I)$ (by the Multiplier Theorem  \cite{Bartle}). In \cite[Lemma 4.1]{Salvador} it is  proved that $BV(I) \cap HK(I)\subset BV_0(\mathbb{R})$. Accordingly, it is possible  to analyze the Fourier transform via the Henstock-Kurzweil integral over $BV_{0}(\mathbb{R})$.
 Thus, in \cite{Francisco} it was shown a generalized Riemann-Lebesgue lemma on unbounded intervals, giving rise to the definition of the $HK$-Fourier transform \cite{M.Guadalupe}. 

\begin{Definicion} 
$\mathcal{L}^{1}(\mathbb{R})+ BV_{0}(\mathbb{R})$ denotes the vector space of functions $f=f_{1}+f_{2}$, where $f_{1}\in \mathcal{L}^{1}(\mathbb{R})$ and $f_{2}\in BV_{0}(\mathbb{R})$.
\end{Definicion}

\begin{Definicion}\label{TransformadaHK}
The $HK$-Fourier transform  is defined as
\begin{equation*}
\mathcal{F}_{HK}:\mathcal{L}^{1}(\mathbb{R})+ BV_{0}(\mathbb{R})\rightarrow C_{\infty}(\mathbb{R}\setminus\{0\}),\end{equation*}
\begin{eqnarray}\label{TransformadaHK1}
\mathcal{F}_{HK}(f)(s)&:=&\frac{1}{\sqrt{2\pi}} \int_{\mathbb{R}}e^{-isx}f(x)dx\nonumber\\
&=& \frac{1}{\sqrt{2\pi}} \int_{\mathbb{R}}cos(sx)f(x)dx -i \frac{1}{\sqrt{2\pi}} \int_{\mathbb{R}}sin(sx)f(x)dx\\
&\equiv & \mathcal{F}^{C}_{HK}(f)(s)-i \mathcal{F}^{S}_{HK}(f)(s)\nonumber
\end{eqnarray}
where the integrals are in Henstock-Kurzweil sense. $\mathcal{F}^{C}_{HK}(f)$ and  $\mathcal{F}^{S}_{HK}(f)$ are called the $HK$-Cosine Fourier and the $HK$-Sine Fourier transforms of $f$,  respectively.
\end{Definicion}

\medskip

\begin{prop}
The $HK$-Fourier transform is well defined.
\end{prop}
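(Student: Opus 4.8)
The plan is to verify the three ingredients implicit in the statement: that for each $f$ in the domain and each $s\neq0$ the Henstock--Kurzweil integral in \eqref{TransformadaHK1} exists, that its value depends only on $f$ and not on the chosen representation $f=f_1+f_2$, and that the resulting function of $s$ lands in $C_\infty(\mathbb{R}\setminus\{0\})$. Since the domain is the sum space $\mathcal{L}^1(\mathbb{R})+BV_0(\mathbb{R})$, I would fix such a decomposition with $f_1\in\mathcal{L}^1(\mathbb{R})$ and $f_2\in BV_0(\mathbb{R})$, treat the two summands separately, and recombine using linearity of the Henstock--Kurzweil integral.

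For the $\mathcal{L}^1$ summand, $e^{-isx}$ is bounded, so $e^{-isx}f_1\in\mathcal{L}^1(\mathbb{R})$ and is therefore Henstock--Kurzweil integrable with integral equal to its Lebesgue value; this contribution is exactly $\mathcal{F}_1(f_1)$, which is continuous on all of $\mathbb{R}$ and vanishes at $\pm\infty$ by the Riemann--Lebesgue Lemma. The substantive part is the $BV_0$ summand, where the oscillatory factor is not itself integrable over $\mathbb{R}$ and existence must come from cancellation. For $s\neq0$ the functions $\cos(sx)$ and $\sin(sx)$ admit the bounded primitives $\sin(sx)/s$ and $-\cos(sx)/s$, so integration by parts on a finite interval (valid there by the Multiplier Theorem of \cite{Bartle}) gives, for instance, $\int_{-A}^{B}\cos(sx)f_2(x)\,dx=\bigl[\tfrac{\sin(sx)}{s}f_2(x)\bigr]_{-A}^{B}-\int_{-A}^{B}\tfrac{\sin(sx)}{s}\,df_2(x)$. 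As $A,B\to\infty$ the boundary term vanishes because $f_2(x)\to0$ while $\sin(sx)/s$ stays bounded, and the Riemann--Stieltjes integral converges absolutely since $\bigl|\int_{\mathbb{R}}\tfrac{\sin(sx)}{s}\,df_2\bigr|\le\tfrac{1}{|s|}\operatorname{Var}(f_2)<\infty$. Because on an unbounded interval the Henstock--Kurzweil integral coincides with the improper integral, convergence of this limit yields Henstock--Kurzweil integrability of $\cos(sx)f_2$ and, symmetrically, of $\sin(sx)f_2$ on $\mathbb{R}$; this simultaneously exhibits the factor $1/s$ that forces the exclusion of $s=0$.

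Once existence is secured, well-definedness with respect to the decomposition is immediate from the uniqueness of the Henstock--Kurzweil integral: the value $\frac{1}{\sqrt{2\pi}}\int_{\mathbb{R}}e^{-isx}f(x)\,dx$ is attached directly to $f$, and linearity shows that any admissible splitting produces the same number $\mathcal{F}_1(f_1)(s)+\mathcal{F}_{HK}(f_2)(s)$, so the decomposition serves only as a device for proving existence. It then remains to place the transform in $C_\infty(\mathbb{R}\setminus\{0\})$: the classical behavior of $\mathcal{F}_1(f_1)$ handles the $\mathcal{L}^1$ part, while continuity in $s$ away from the origin and decay as $|s|\to\infty$ for $\mathcal{F}^{C}_{HK}(f_2)$ and $\mathcal{F}^{S}_{HK}(f_2)$ follow from the generalized Riemann--Lebesgue lemma established in \cite{Francisco}; adding the two contributions gives $\mathcal{F}_{HK}(f)\in C_\infty(\mathbb{R}\setminus\{0\})$.

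I anticipate the $BV_0$ summand to be the main obstacle, specifically justifying the integration-by-parts reduction rigorously within the Henstock--Kurzweil framework on the unbounded interval and controlling the dependence on $s$ near the origin. Once the representation via the convergent Stieltjes integral against $df_2$ is in hand, both existence and membership in $C_\infty(\mathbb{R}\setminus\{0\})$ reduce to the Multiplier Theorem, the improper-integral characterization of the Henstock--Kurzweil integral, and the cited generalized Riemann--Lebesgue lemma.
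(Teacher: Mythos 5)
Your argument is correct, but it takes a genuinely different (and more comprehensive) route than the paper's. The paper's own proof is three lines and addresses only independence of the representation: given $f=u_1+v_1=u_2+v_2$ with $u_i\in\mathcal{L}^1(\mathbb{R})$ and $v_i\in BV_0(\mathbb{R})$, it notes that $u_1-u_2=v_2-v_1\in\mathcal{L}^1(\mathbb{R})\cap BV_0(\mathbb{R})$, where the Henstock--Kurzweil and Lebesgue integrals coincide, so any two splittings produce the same value; existence of the integrals for the $BV_0$ summand is not reproved there but delegated to the surrounding discussion (the generalized Riemann--Lebesgue lemma of \cite{Francisco}). You instead establish existence from scratch --- integration by parts against $df_2$ on finite intervals via the Multiplier Theorem, vanishing of the boundary terms since $f_2\in BV_0(\mathbb{R})$, the Cauchy estimate $\bigl|\int\frac{\sin(sx)}{s}\,df_2\bigr|\le\frac{1}{|s|}\operatorname{Var}(f_2)$, and Hake's theorem --- and then dispose of representation-independence by the cleaner observation that $e^{-isx}f$ is itself HK-integrable, so the value is attached to $f$ by uniqueness of the integral and the decomposition is only a scaffold. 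Both arguments are sound; the paper's lemma about $\mathcal{L}^1\cap BV_0$ is the one you need if the transform were \emph{defined} summand-by-summand, whereas your route is self-contained, makes the role of $s\neq 0$ explicit through the factor $1/|s|$, and yields the $C_\infty(\mathbb{R}\setminus\{0\})$ membership along the way rather than by citation. The only point to tighten is the convergence of $\int_{-A}^{B}\frac{\sin(sx)}{s}\,df_2$ as $A,B\to\infty$: state it as a Cauchy criterion using that the tail variation $\operatorname{Var}(f_2;[B,B'])$ tends to zero, rather than only exhibiting the global bound.
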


\begin{proof}
 Suppose $f=u_1+v_1=u_2+v_2$ with $u_i \in \mathcal{L}^1(\mathbb{R})$ and $v_i\in BV_0(\mathbb{R})$ for $i=1, 2$. Therefore, 
$$u_1-u_2=v_2-v_1\in \mathcal{L}^1(\mathbb{R})\cap BV_0(\mathbb{R}).$$

This yields the result since the Henstock-Kurzweil integral coincides with the Lebesgue integral on the intersection $\mathcal{L}^1(\mathbb{R})\cap BV_0(\mathbb{R})$, see \cite{{M.Guadalupe}}. Therefore, $\mathcal{F}_{HK}(f)$ does not depend on the representation of the function $f$.

\end{proof}

Note that some integrals in (\ref{TransformadaHK1}) might not converge at $s=0$. 
Recently, in \cite{Alfredo}  was shown that the $HK$-Fourier Cosine transform is a bounded linear operator from $BV_{0}(\mathbb{R})$ into $HK(\mathbb{R})$. 
This is related to the question about whether the $HK$-Fourier 
transform is continuous at $s=0$.  
By \cite[Theorem 2.5.]{Francisco} the $HK$-Fourier Cosine and  Sine transforms of any function $f$ in $BV_0(\mathbb{R})$, $\mathcal{F}^{C}_{HK}(f)$ and  $\mathcal{F}^{S}_{HK}(f)$  are continuous functions (except at $s=0$) and vanish at infinity as $o(|s|^{-1})$.

\begin{thm}[\cite{M.Guadalupe}]\label{PlancherelHK}
If $f\in \mathcal{L}^{p}(\mathbb{R})\cap(\mathcal{L}^{1}(\mathbb{R})+BV_{0}(\mathbb{R}))$, for $1\leq p\leq 2$, then
\begin{equation*}
\mathcal{F}_{HK}(f)\in \mathcal{L}^q(\mathbb{R})\cap C_{\infty}(\mathbb{R}\setminus{\{0\}}),
\end{equation*}
where $p^{-1}+q^{-1}=1$. Moreover, 
\begin{equation*}
\mathcal{F}_{p}(f)(s)=\mathcal{F}_{HK}(f)(s),
\end{equation*}
almost everywhere. In particular, if $f\in BV_0(\mathbb{R})$, then  $$\mathcal{F}_{HK}(f)\in C_{\infty}(\mathbb{R}\setminus{\{0\}}).$$
\end{thm}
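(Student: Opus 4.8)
The plan is to treat the three assertions in turn, reducing the $\mathcal{L}^q$ membership to the almost-everywhere identity $\mathcal{F}_p(f)=\mathcal{F}_{HK}(f)$ together with the Hausdorff--Young inequality, handling the continuity through the additive decomposition of $f$, and obtaining the ``in particular'' clause directly from the cited generalized Riemann--Lebesgue lemma.

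First I would dispatch the continuity claim. Write $f=g_1+g_2$ with $g_1\in\mathcal{L}^1(\mathbb{R})$ and $g_2\in BV_0(\mathbb{R})$, which is possible by hypothesis. By linearity of the Henstock--Kurzweil integral, $\mathcal{F}_{HK}(f)=\mathcal{F}_{HK}(g_1)+\mathcal{F}_{HK}(g_2)$; since $g_1\in\mathcal{L}^1(\mathbb{R})$ the first term coincides with $\mathcal{F}_1(g_1)\in C_\infty(\mathbb{R})$ by the Riemann--Lebesgue lemma, while by \cite[Theorem 2.5]{Francisco} the second term is continuous on $\mathbb{R}\setminus\{0\}$ and decays like $o(|s|^{-1})$ at infinity. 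Their sum is therefore continuous off the origin and vanishes at $\pm\infty$, so $\mathcal{F}_{HK}(f)\in C_\infty(\mathbb{R}\setminus\{0\})$. The ``in particular'' statement is the special case $g_1=0$ and needs nothing beyond \cite[Theorem 2.5]{Francisco}.

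The core of the argument is the almost-everywhere identity, which I would obtain through the truncations $f^{(n)}:=f\cdot\chi_{[-n,n]}$. Because $[-n,n]$ has finite measure and $f\in\mathcal{L}^p(\mathbb{R})$, Hölder's inequality gives $f^{(n)}\in\mathcal{L}^1(\mathbb{R})\cap\mathcal{L}^p(\mathbb{R})$, whence $\mathcal{F}_p(f^{(n)})=\mathcal{F}_1(f^{(n)})$. On one hand, $f^{(n)}\to f$ in $\mathcal{L}^p(\mathbb{R})$ by dominated convergence with dominant $|f|$, so boundedness of $\mathcal{F}_p\colon\mathcal{L}^p\to\mathcal{L}^q$ (Hausdorff--Young) yields $\mathcal{F}_1(f^{(n)})\to\mathcal{F}_p(f)$ in $\mathcal{L}^q(\mathbb{R})$; passing to a subsequence, the convergence also holds a.e. On the other hand, since $f^{(n)}\in\mathcal{L}^1(\mathbb{R})$ one has $\mathcal{F}_1(f^{(n)})(s)=\mathcal{F}_{HK}(f^{(n)})(s)$ for every $s$, and I would show that $\mathcal{F}_{HK}(f^{(n)})(s)\to\mathcal{F}_{HK}(f)(s)$ for every $s\neq 0$. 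Matching the two limits of this common sequence then forces $\mathcal{F}_p(f)(s)=\mathcal{F}_{HK}(f)(s)$ a.e.; finally, since $\mathcal{F}_p(f)\in\mathcal{L}^q(\mathbb{R})$ by Hausdorff--Young, the a.e. identity gives $\mathcal{F}_{HK}(f)\in\mathcal{L}^q(\mathbb{R})$, completing the first assertion.

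The delicate point, and the step I expect to be the main obstacle, is the pointwise convergence $\mathcal{F}_{HK}(f^{(n)})(s)\to\mathcal{F}_{HK}(f)(s)$ for $s\neq 0$. Splitting $f^{(n)}=g_1\chi_{[-n,n]}+g_2\chi_{[-n,n]}$, the $\mathcal{L}^1$ part converges for all $s$ by dominated convergence applied to $\int_{-n}^n e^{-isx}g_1(x)\,dx$. For the $BV_0$ part one has $g_2\chi_{[-n,n]}\in\mathcal{L}^1(\mathbb{R})$, being bounded with compact support, so $\mathcal{F}_{HK}(g_2\chi_{[-n,n]})(s)=\tfrac{1}{\sqrt{2\pi}}\int_{-n}^n e^{-isx}g_2(x)\,dx$, and the question becomes whether these symmetric truncations converge to the improper integral $\tfrac{1}{\sqrt{2\pi}}\int_{\mathbb{R}}e^{-isx}g_2(x)\,dx$. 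For $s\neq 0$ the oscillatory integrands $\cos(sx)g_2(x)$ and $\sin(sx)g_2(x)$ belong to $HK(\mathbb{R})$ — precisely what makes $\mathcal{F}_{HK}(g_2)$ well defined off the origin, via integration by parts using that $g_2$ is a multiplier of bounded variation vanishing at infinity — and for a Henstock--Kurzweil integrable function on $\mathbb{R}$ the integral is the limit of its integrals over $[-n,n]$ (the Hake-type/Cauchy-extension property in \cite{Bartle}). This yields the convergence for $s\neq 0$ and closes the argument; the care needed lies exactly in invoking HK integrability of the oscillatory integrand and the limit-of-truncations property, neither of which has a counterpart in the Lebesgue setting.
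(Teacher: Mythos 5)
The paper offers no proof of Theorem \ref{PlancherelHK}: it is imported verbatim from \cite{M.Guadalupe}. Your argument is correct and is essentially the argument of that reference: truncate to $f\chi_{[-n,n]}\in\mathcal{L}^1\cap\mathcal{L}^p$, use Hausdorff--Young to extract a subsequence with $\mathcal{F}_1(f\chi_{[-n_k,n_k]})\to\mathcal{F}_p(f)$ a.e., and identify the pointwise limit with $\mathcal{F}_{HK}(f)$ off the origin via Hake's theorem together with the generalized Riemann--Lebesgue lemma of \cite{Francisco}, which also supplies the $C_\infty(\mathbb{R}\setminus\{0\})$ statement for the $BV_0$ summand. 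The only loose phrase is attributing the HK-integrability of $\cos(sx)g_2(x)$ to the Multiplier Theorem --- $\cos(sx)\notin HK(\mathbb{R})$, so the correct tool is the Chartier--Dirichlet test (bounded primitive times a $BV$ function vanishing at infinity), which is exactly what \cite{Francisco} provides --- but this does not affect the validity of the proof.
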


\section{An  approach of $\mathcal{F}_p$ via $\mathcal{F}_{HK}$} \label{sec:5}

According to the classical theory, it is not always possible to achieve a pointwise  expression of the Fourier transform operator $\mathcal{F}_{p}$ in Lebesgue's theory of integration, for $1<p\leq 2$. This is   because there exist $p$-integrable functions  that are not absolutely integrable. Nevertheless, there exist
functions belonging to $\mathcal{L}^{p}(\mathbb{R})\cap BV_{0}(\mathbb{R})\setminus\mathcal{L}^{1}(\mathbb{R})$; in accordance with  Theorem \ref{PlancherelHK}  we can apply the Henstock-Kurzweil integral in order to approach the Fourier transform operator $\mathcal{F}_p$  on  subsets of $\mathcal{L}^{p}(\mathbb{R})\setminus\mathcal{L}^{1}(\mathbb{R})$, for $1<p\leq 2$. 

The set of absolutely continuous functions over each compact interval is denoted by $AC_{loc}(\mathbb{R})$  \cite{Gordon,Leoni,Liflyand, Liflyand3, Talvila2}.

\begin{thm}\label{spaceLoc}
If $\phi \in \mathcal{L}^p(\mathbb{R}) \cap BV_{0}(\mathbb{R}) \cap AC_{loc}(\mathbb{R})$, for $1\leq p\leq 2$, then 
\begin{itemize}
\item [(i)] $\mathcal{F}_{HK} (\phi) \in C_{\infty}(\mathbb{R}  \setminus \{0\}).$
\item [(ii)]$\mathcal{F}_{HK} (\phi)(s)= \mathcal{F}_{p}(\phi)(s)$ a.e.

\item [(iii)] For every  $s\in\mathbb{R}\setminus\{0\}$, 
\begin{equation}\label{FHK}
\mathcal{F}_{HK}(\phi)(s)=-\frac{i}{s}\mathcal{F}_1(\phi')(s).
\end{equation}
\item [(iv)] Moreover, $$\left| \mathcal{F}_{HK}(\phi)(s) \right|\leq \frac{1}{\sqrt{2\pi}} \cdot \frac{1}{|s|}\parallel\phi'\parallel_{1}.$$
\end{itemize} 
\end{thm}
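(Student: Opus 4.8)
The plan is to read off (i) and (ii) directly from Theorem \ref{PlancherelHK}, and to obtain (iii) — from which (iv) is immediate — by integrating by parts on compact intervals and then passing to the limit. First I would note that since $BV_0(\mathbb{R})\subset\mathcal{L}^1(\mathbb{R})+BV_0(\mathbb{R})$, the hypothesis gives $\phi\in\mathcal{L}^p(\mathbb{R})\cap(\mathcal{L}^1(\mathbb{R})+BV_0(\mathbb{R}))$. Theorem \ref{PlancherelHK} then yields at once that $\mathcal{F}_{HK}(\phi)\in C_\infty(\mathbb{R}\setminus\{0\})$, which is (i), and that $\mathcal{F}_{HK}(\phi)(s)=\mathcal{F}_p(\phi)(s)$ a.e., which is (ii); no extra work is needed for these two items.

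For (iii), the preliminary step is to check $\phi'\in\mathcal{L}^1(\mathbb{R})$. Since $\phi\in AC_{loc}(\mathbb{R})$ it is differentiable a.e. and is the Lebesgue indefinite integral of $\phi'$ on each compact interval; because $\phi\in BV(\mathbb{R})$ its total variation is finite, and for an absolutely continuous function the variation over $[a,b]$ equals $\int_a^b|\phi'|$. Letting $a\to-\infty$ and $b\to\infty$ gives $\Vert\phi'\Vert_1=V(\phi)<\infty$, so $\mathcal{F}_1(\phi')$ is well defined. Next I would fix $s\neq 0$ and integrate by parts on $[a,b]$: on this compact interval $\phi$ and $e^{-isx}$ are both absolutely continuous, so their product is absolutely continuous and the fundamental theorem of calculus (in the Lebesgue sense) applied to $\frac{d}{dx}(e^{-isx}\phi(x))=-is\,e^{-isx}\phi(x)+e^{-isx}\phi'(x)$ gives
\[
\int_a^b e^{-isx}\phi'(x)\,dx=\big[e^{-isx}\phi(x)\big]_a^b+is\int_a^b e^{-isx}\phi(x)\,dx.
\]
The boundary term tends to $0$ as $a\to-\infty$, $b\to\infty$ because $|e^{-isx}|=1$ and $\phi$ vanishes at $\pm\infty$ (recall $\phi\in BV_0(\mathbb{R})$), and the left-hand side converges to $\sqrt{2\pi}\,\mathcal{F}_1(\phi')(s)$ since $\phi'\in\mathcal{L}^1(\mathbb{R})$. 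Solving for the remaining integral and using $1/i=-i$ then produces $\mathcal{F}_{HK}(\phi)(s)=-\tfrac{i}{s}\mathcal{F}_1(\phi')(s)$.

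I expect the main obstacle to be justifying that the remaining term $\lim_{a\to-\infty,\,b\to\infty}\int_a^b e^{-isx}\phi(x)\,dx$ equals the Henstock–Kurzweil integral $\int_{\mathbb{R}}e^{-isx}\phi(x)\,dx$ defining $\mathcal{F}_{HK}(\phi)(s)$; this is precisely the content of Hake's theorem for the $HK$ integral (equivalently, interval additivity over $\mathbb{R}$ together with convergence of the improper integral), and it requires that $e^{-isx}\phi(x)$ be $HK$-integrable on $\mathbb{R}$, which is guaranteed for $\phi\in BV_0(\mathbb{R})$ by the generalized Riemann–Lebesgue lemma of \cite{Francisco}. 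Once (iii) is established, item (iv) follows immediately from the elementary estimate $|\mathcal{F}_1(g)(s)|\le\frac{1}{\sqrt{2\pi}}\Vert g\Vert_1$ applied to $g=\phi'$, which yields $|\mathcal{F}_{HK}(\phi)(s)|\le\frac{1}{\sqrt{2\pi}}\cdot\frac{1}{|s|}\Vert\phi'\Vert_1$.
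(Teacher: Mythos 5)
Your proposal is correct, and items (i), (ii) and (iv) are handled exactly as in the paper; the difference lies in how you establish (iii). The paper also reduces to compact intervals via Hake's theorem, but then performs the integration by parts inside Henstock--Kurzweil theory: it invokes the Multiplier Theorem to rewrite $\int_{-T}^{T}\cos(st)\phi(t)\,dt$ and $\int_{-T}^{T}\sin(st)\phi(t)\,dt$ as Riemann--Stieltjes integrals against $d\phi$, then uses the fact that for $\phi\in AC_{loc}$ one has $\int_{-T}^{T}h\,d\phi=\int_{-T}^{T}h\,\phi'\,dt$ (citing Monteiro et al.\ and McLeod), and finally kills the extra $\sin(sT)$, $\cos(sT)$ terms using $\phi\in BV_0\cap AC_{loc}$ and $\phi'\in\mathcal{L}^1$ (via Leoni). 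You instead integrate by parts directly in Lebesgue's theory on $[a,b]$, using that $e^{-isx}\phi(x)$ is absolutely continuous there and applying the fundamental theorem of calculus to its a.e.\ derivative, and you only need Henstock--Kurzweil machinery at the very last step, to identify $\lim_{a\to-\infty,\,b\to\infty}\int_a^b e^{-isx}\phi(x)\,dx$ with the $HK$ integral defining $\mathcal{F}_{HK}(\phi)(s)$ via Hake (the required $HK$-integrability being supplied by $\phi\in BV_0$). Your route is shorter and more elementary: it avoids the Stieltjes detour, treats the complex exponential in one stroke rather than handling cosine and sine separately, and makes the vanishing of the boundary term transparent. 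Your derivation of $\|\phi'\|_1=V(\phi)<\infty$ from $AC_{loc}\cap BV$ is also sound and replaces the paper's citation of Leoni's Corollary 2.23. Both arguments prove the same statement; the paper's version has the mild advantage of exhibiting the intermediate Stieltjes formulas that it reuses later, but as a self-contained proof of (iii) yours is cleaner.
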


\begin{proof}
Let $\phi\in \mathcal{L}^2(\mathbb{R}) \cap BV_{0}(\mathbb{R}) \cap AC_{loc}(\mathbb{R})$. By Theorem  \ref{PlancherelHK} we get $(i)$ and $(ii)$. Note that $\mathcal{F}_{HK} (\phi)(s)$ is defined for any $s\not=0$, whereas $\mathcal{F}_p (\phi)(s)$ is defined almost  everywhere. Applying the Hake Theorem we get,
\begin{eqnarray}\label{meanvalue}
\mathcal{F}_{HK}(\phi)(s)=\frac{1}{\sqrt{2\pi}} \lim_{T\rightarrow\infty}\left[\int_{-T}^{T}\cos(st)\phi(t)dt- i\int_{-T}^{T}\sin(st)\phi(t)dt\right].
\end{eqnarray}
From the Multiplier Theorem \cite{Bartle} and the hypothesis for $\phi$  we have
\begin{eqnarray*}
\int_{\mathbb{R}} \cos(st)\phi(t)dt=-\lim_{T\rightarrow\infty} \int_{-T}^{T}\left[ \frac{\sin(st) +\sin(sT)}{s}\right]d\phi.
\end{eqnarray*}


\noindent Similarly for the Sine  Fourier transform we get
\begin{eqnarray*}\label{sin}
\int_{\mathbb{R}}\sin(st)\phi(t)dt=-\lim_{T\rightarrow\infty}\int_{-T}^{T} \left[\frac{-\cos(st)+\cos(sT)}{s}\right]d\phi.
\end{eqnarray*}
Where we have used that $\phi\in BV_0(\mathbb{R})$ vanishes at infinity.
This yields, from (\ref{meanvalue}),
\begin{eqnarray*}
\mathcal{F}_{HK}(\phi)(s)&=&
-\lim_{T \rightarrow \infty} \frac{1}{\sqrt{2\pi}} \int_{-T}^{T} \frac{\sin(st)+\sin(sT)}{s}d\phi \nonumber \\
&& +\  i\ \lim_{T \rightarrow \infty} \int_{-T}^{T}\frac{-\cos(st)+\cos(sT)}{s}d\phi .
\end{eqnarray*}
Note that  the  Stieltjes-type integrals below exist as Riemann-Stieltjes and Lebesgue-Stieltjes integrals  \cite{Ashordia,Groh,Monteiro}. Since  $\phi \in AC_{loc}(\mathbb{R})$,   by \cite[Theorem 6.2.12]{Monteiro} and \cite[Exercise 2, pag. 186]{Mc} it follows that
$$\int_{-T}^{T} (\sin(st)+\sin(sT))d\phi=  \int_{-T}^{T}\left(\sin(st) +\sin(sT)\right)\phi'(t)dt$$ 
and
$$  \int_{-T}^{T}(-\cos(st)+\cos(sT))d\phi 
=\int_{-T}^{T} \left(-\cos(st)+\cos(sT)\right) \phi'(t)dt.
$$
Since $\phi\in BV_0(\mathbb{R})\cap AC_{loc}(\mathbb{R})$, one gets

\begin{equation}\label{limite cero}
\lim_{T\rightarrow\infty}\left[(\cos(sT)-\sin(sT))\int_{-T}^{T}\phi'(t)dt\right]=0.
\end{equation}
and \cite[ Corollary 2.23]{Leoni} implies that $\phi'\in \mathcal{L}^{1}(\mathbb{R})$. Therefore, we get
\begin{eqnarray}\mathcal{F}_{HK}(\phi)(s)&=& \frac{1}{\sqrt{2\pi}}\frac{1}{s}\lim_{T \rightarrow \infty}\left[  -\int_{-T}^{T}[\sin(st)+i\cos(st)]\phi'(t)dt\right] \nonumber\\&=& \frac{1}{is}\mathcal{F}_1(\phi')(s).\nonumber
\end{eqnarray}
Furthermore,
 $$|\mathcal{F}_{HK}(\phi)(s)|\leq \frac{1}{\sqrt{2\pi}} \cdot \frac{1}{|s|}\parallel\phi'\parallel_{1}.$$
For $1\leq p<2$ the same formulas and argumentation are valid.   
\end{proof}
\begin{remark} Since $f\in BV_{0}(\mathbb{R}) \cap AC_{loc}(\mathbb{R})$, by \cite[Theorem 7.5, pag. 281]{Bartle} and \cite[Theorem 3.39]{Leoni} we have that $||\phi'||_1=Var(\phi,\mathbb{R})$ and $\phi\in AC(\mathbb{R})$. Thus, $  BV_{0}(\mathbb{R}) \cap AC(\mathbb{R})=  BV_{0}(\mathbb{R}) \cap AC_{loc}(\mathbb{R})$.
\end{remark}
From Theorem \ref{spaceLoc} we have the following result.

\begin{cor}\label{corollary1} Let  $\phi\in \mathcal{L}^p(\mathbb{R}) \cap BV_{0}(\mathbb{R}) \cap AC_{loc}(\mathbb{R})$, for $1\leq p\leq 2$.
\begin{itemize}
\item [(i)] If $\phi$ is an even function, then

\begin{equation}\label{even}\mathcal{F}_{HK}(\phi)(s)= -\sqrt{\frac{2}{\pi}}\cdot\frac{1}{s} \int_{0}^{\infty} \sin(st)\phi'(t)dt. 
\end{equation}

\item [(ii)] If $\phi$ is an odd function, then 
\begin{eqnarray}\label{odd}
\mathcal{F}_{HK}(\phi)(s)=-i\sqrt{\frac{2}{\pi}}\cdot \frac{1}{s}\int_{0}^{\infty}\left[ \cos(st)-1\right]\phi'(t)dt.
\end{eqnarray}
\end{itemize}
In either case, $\mathcal{F}_{HK}(\phi)(s)=\mathcal{F}_{p}(\phi)(s)$  a.e.,  where  $\mathcal{F}_{HK}(\phi)\in C_{\infty}(\mathbb{R}\setminus\{0\}).$
\end{cor}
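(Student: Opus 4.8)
The plan is to derive both formulas directly from the representation $\mathcal{F}_{HK}(\phi)(s)=-\frac{i}{s}\mathcal{F}_1(\phi')(s)$ supplied by Theorem \ref{spaceLoc}(iii), exploiting only the parity of the derivative. First I would record two facts established in (or alongside) Theorem \ref{spaceLoc}: since $\phi\in AC_{loc}(\mathbb{R})$ the derivative $\phi'$ exists almost everywhere, and it was shown there that $\phi'\in\mathcal{L}^1(\mathbb{R})$. Consequently every integral below converges absolutely as a Lebesgue integral and may be split across the origin and recombined freely. Differentiating the symmetry relation $\phi(-x)=\pm\phi(x)$ then gives the key observation: if $\phi$ is even, $\phi'$ is odd; if $\phi$ is odd, $\phi'$ is even.

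For the even case I would expand $\mathcal{F}_1(\phi')(s)=\frac{1}{\sqrt{2\pi}}\int_{\mathbb{R}}[\cos(st)-i\sin(st)]\phi'(t)\,dt$. Because $\phi'$ is odd, the product $\cos(st)\phi'(t)$ is odd and integrates to zero over $\mathbb{R}$, while $\sin(st)\phi'(t)$ is even and its integral over $\mathbb{R}$ equals $2\int_0^\infty\sin(st)\phi'(t)\,dt$. Substituting into $-\frac{i}{s}\mathcal{F}_1(\phi')(s)$, collecting the factor $(-i)(-i)=-1$, and simplifying the constant $\frac{2}{\sqrt{2\pi}}=\sqrt{2/\pi}$ yields (\ref{even}). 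The odd case is symmetric: now the sine integral vanishes by parity and the cosine integral doubles, producing $\mathcal{F}_{HK}(\phi)(s)=-i\sqrt{2/\pi}\,\frac1s\int_0^\infty\cos(st)\phi'(t)\,dt$.

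The only point needing a separate argument is the extra $-1$ appearing in (\ref{odd}). Here I would show it suffices to verify $\int_0^\infty\phi'(t)\,dt=0$, so that subtracting this quantity from the cosine integral leaves it unchanged. Since $\phi$ is odd and continuous (being in $AC_{loc}(\mathbb{R})$), one has $\phi(0)=0$; since $\phi\in BV_0(\mathbb{R})$, $\lim_{T\to\infty}\phi(T)=0$. The fundamental theorem of calculus for absolutely continuous functions then gives $\int_0^\infty\phi'(t)\,dt=\lim_{T\to\infty}[\phi(T)-\phi(0)]=0$, which completes the passage to (\ref{odd}).

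Finally, the assertions $\mathcal{F}_{HK}(\phi)(s)=\mathcal{F}_p(\phi)(s)$ almost everywhere and $\mathcal{F}_{HK}(\phi)\in C_\infty(\mathbb{R}\setminus\{0\})$ are immediate from parts (i) and (ii) of Theorem \ref{spaceLoc} and require no further computation. I expect no serious obstacle in this argument: the entire content reduces to parity bookkeeping and the elementary vanishing of $\int_0^\infty\phi'$, both of which are licensed by the absolute integrability of $\phi'$ and the boundary behavior guaranteed by $\phi\in BV_0(\mathbb{R})\cap AC_{loc}(\mathbb{R})$. If anything deserves care, it is merely making explicit that the split-and-recombine manipulations are valid \emph{because} the integrals are absolutely convergent, rather than only conditionally convergent Henstock--Kurzweil integrals.
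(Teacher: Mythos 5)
Your proposal is correct and follows essentially the same route the paper intends: the corollary is stated as an immediate consequence of Theorem \ref{spaceLoc}, and deducing (\ref{even}) and (\ref{odd}) from part (iii) by the parity of $\phi'$ is exactly that derivation. Your explicit verification that $\int_{0}^{\infty}\phi'(t)\,dt=0$ (via $\phi(0)=0$ for an odd continuous function and $\phi(T)\to 0$ from $\phi\in BV_{0}(\mathbb{R})$), which reconciles $\cos(st)$ with $\cos(st)-1$ in the odd case, is a detail the paper leaves implicit and is worth recording.
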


E. Liflyand in \cite{Liflyand0}-\cite{Liflyand3} worked on a  subspace of $BV_{0}(\mathbb{R}) \cap AC_{loc}(\mathbb{R})$ to obtain integrability and asymptotic formulas for the Fourier  transform. We restrict to the domain of the Fourier transform operator  in order to provide   new integral expressions  of the classical Fourier transform $\mathcal{F}_p$.

The implications from these results are that the classical Fourier transform $\mathcal{F}_p(f)(s)$ for $f$ in a dense subspace  of ${\mathcal L}^p(\mathbb{R})$ is represented by a Lebesgue integral,  is a continuous function, except at $ s= 0$ and vanishes at infinity as $o(|s|^{-1})$.

\medskip

The algorithms of numerical integration are very important in applications, for example, approximation of the Fourier transform  have implications in digital image processing, economic estimates, acoustic phonetics, among others \cite{Bracewell}, \cite{Ruzhansky}. There exist  integrable functions whose primitives cannot be calculated explicitly; thus  numerical integration is fundamental to achieve explicit results.
 
 Also note that the Lebesgue integral is not suitable for numerical approximations.
 Alternatively, (\ref{even}) and (\ref{odd}) provide   expressions  that might be used to approximate numerically $\mathcal{F}_{p}(f)$   at specific values.  Actually, as a consequence of the Hake Theorem, it is possible to approximate  $\mathcal{F}_{HK}(\phi)(s)$ via the relation
\begin{equation}\label{aproximacion}\mathcal{F}_{HK}(\phi)(s)\approx\frac{1}{\sqrt{2\pi}} \int_{|t|\leq M}e^{-ist}\phi(t)dt \;\quad (M\to\infty),
\end{equation}
for any $s\neq 0$, $\phi\in \mathcal{L}^p(\mathbb{R})\cap BV_{0}(\mathbb{R})\setminus \mathcal{L}^1(\mathbb{R})$ ($1<p\leq 2$). Moreover,   Theorem \ref{spaceLoc}  justifies and
assures that $\mathcal{F}_p(\phi)(s)$ is asymptotically approximated by (\ref{aproximacion}). Note that Lebesgue's theory of integration only assures  convergence of the integrals in (\ref{aproximacion}) for a sequence of values of $M$ and $s$  in some (unknown) subset $\mathfrak{A}\varsubsetneq\mathbb{R}.$ 


\section{ Differentiability of the Fourier transform}

A classical theorem in Lebesgue's theory is about differentiability under the  integral sign \cite{Rudin} and \cite{Gasquet}. The following result is a generalization.
\begin{thm}\label{derivation}  Let $f \in \mathcal{L}^1(\mathbb{R})+BV_{0}(\mathbb{R})$ such that $g(t):=t f(t)$ belongs to $\mathcal{L}^1(\mathbb{R})+BV_{0}(\mathbb{R})$.   Then  $\mathcal{F}_{HK}(f)$ is \textcolor{red}{continuously} differentiable away from zero and 



 \begin{equation}
\label{derivative od FKHFT} \frac{d}{ds}\mathcal{F}_{HK}(f)(s)=-i\mathcal{F}_{HK}(g)(s), \quad (s\neq 0).
\end{equation}

\end{thm}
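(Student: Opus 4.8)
The plan is to reduce the differentiation over $\mathbb{R}$ to differentiation over compact intervals, where the classical theorem on differentiation under the integral sign applies, and then to pass to the limit through a \emph{uniform} convergence argument for the derivatives. First I would observe that for every fixed $T>0$ the restriction of $f$ to $[-T,T]$ lies in $\mathcal{L}^1([-T,T])$: writing $f=f_1+f_2$ with $f_1\in\mathcal{L}^1(\mathbb{R})$ and $f_2\in BV_0(\mathbb{R})$, the part $f_2$ is of bounded variation on the compact interval, hence bounded and Lebesgue integrable there. On $[-T,T]$ both $e^{-isx}f(x)$ and its $s$-derivative $-ix\,e^{-isx}f(x)$ are dominated by the integrable functions $|f|$ and $T|f|$ respectively, uniformly in $s$, so by the classical theorem \cite{Rudin} the function
\[
F_T(s):=\frac{1}{\sqrt{2\pi}}\int_{-T}^{T}e^{-isx}f(x)\,dx
\]
is differentiable with $F_T'(s)=-\tfrac{i}{\sqrt{2\pi}}\int_{-T}^{T}e^{-isx}g(x)\,dx$. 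By the Hake Theorem, $F_T(s)\to\mathcal{F}_{HK}(f)(s)$ and $F_T'(s)\to -i\,\mathcal{F}_{HK}(g)(s)$ pointwise for every $s\neq 0$, the latter limit being well defined precisely because $g\in\mathcal{L}^1(\mathbb{R})+BV_0(\mathbb{R})$.

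The decisive step is to upgrade the convergence $F_T'\to -i\,\mathcal{F}_{HK}(g)$ to uniform convergence on every compact set $K\subset\mathbb{R}\setminus\{0\}$; the standard theorem on differentiation of a uniformly convergent sequence of derivatives then yields that $\mathcal{F}_{HK}(f)$ is differentiable with the asserted derivative. To this end I would control the tails $\int_{|x|>T}e^{-isx}g(x)\,dx$ uniformly for $s\in K$. Decomposing $g=g_1+g_2$ with $g_1\in\mathcal{L}^1(\mathbb{R})$ and $g_2\in BV_0(\mathbb{R})$, the $\mathcal{L}^1$ tail is bounded by $\int_{|x|>T}|g_1|$, which tends to $0$ independently of $s$. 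For the $BV_0$ tail I would integrate by parts in the Riemann--Stieltjes sense, using $g_2(\pm\infty)=0$, to obtain on $[T,\infty)$
\[
\int_{T}^{\infty}e^{-isx}g_2(x)\,dx=\frac{g_2(T)\,e^{-isT}}{is}+\frac{1}{is}\int_{T}^{\infty}e^{-isx}\,dg_2(x),
\]
together with the analogous expression on $(-\infty,-T]$. Since $|g_2(T)|\to 0$ and the tail variation $Var(g_2,[T,\infty))\to 0$ as $T\to\infty$, while $1/|s|$ is bounded on $K$, both terms tend to $0$ uniformly in $s\in K$.

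This estimate is the main obstacle, and it is exactly the point where the Lebesgue argument must be replaced: domination of the difference quotients by $|g(x)|=|x\,f(x)|$ is useless here, as $|g|$ need not be integrable when the $BV_0$ component is present, and it is the bounded-variation structure (finite total variation with vanishing tails) that supplies the uniform control. Finally, the identity $\frac{d}{ds}\mathcal{F}_{HK}(f)(s)=-i\,\mathcal{F}_{HK}(g)(s)$ for $s\neq 0$ follows from the uniform convergence just established, and continuity of the derivative away from the origin is inherited from the continuity of $\mathcal{F}_{HK}(g)$ on $\mathbb{R}\setminus\{0\}$: the $\mathcal{L}^1$ part gives a continuous transform by Riemann--Lebesgue, while the $BV_0$ part is continuous off the origin by \cite[Theorem 2.5]{Francisco}. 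This completes the argument.
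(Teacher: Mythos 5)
Your argument is correct, but it follows a genuinely different route from the paper. The paper fixes a compact interval $[\alpha,\beta]$ not containing the origin, bounds $\Phi_n(s)=\int_{-n}^{n}\tfrac{d}{ds}G(s,t)\,dt$ by $\tfrac{2}{|s|}Var(g,\mathbb{R})$, interchanges the $s$- and $t$-integrations via the Dominated Convergence and Fubini theorems together with Hake's theorem to obtain $\int_{\alpha}^{\beta}\bigl(-i\mathcal{F}_{HK}(g)\bigr)=\mathcal{F}_{HK}(f)(\beta)-\mathcal{F}_{HK}(f)(\alpha)$, and then invokes Talvila's necessary-and-sufficient criterion \cite[Theorem 4]{Talvila2} to conclude differentiability under the integral sign; in effect it verifies the fundamental-theorem-of-calculus identity for the candidate derivative. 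You instead differentiate the truncations $F_T$ by the classical Lebesgue theorem on $[-T,T]$ and upgrade the pointwise convergence $F_T'\to -i\mathcal{F}_{HK}(g)$ to local uniform convergence on compacta of $\mathbb{R}\setminus\{0\}$ via an explicit Riemann--Stieltjes integration by parts on the tails, exploiting $g_2(\pm\infty)=0$ and $Var(g_2,[T,\infty))\to 0$; the standard theorem on uniform convergence of derivatives then closes the argument. Your tail estimate is the same mechanism that underlies the paper's bound on $\Phi_n$, but your version avoids both the Fubini interchange over the unbounded $t$-domain and the appeal to Talvila's characterization, making the proof more self-contained and making transparent that it is the vanishing tail variation of the $BV_0$ component (rather than any integrability of $|g|$) that drives the result; the local uniformity you establish is also slightly stronger information than the paper extracts. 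The paper's route, on the other hand, is shorter given the cited machinery and generalizes directly to the subsequent $\mathcal{L}^p$ propositions, where only a.e.\ limits along subsequences are available and a uniform-convergence argument would not apply. Both proofs correctly confine the conclusion to $s\neq 0$, since the $1/|s|$ factor in the respective estimates is controlled only on compacta away from the origin, and both obtain continuity of the derivative from the continuity of $\mathcal{F}_{HK}(g)$ off the origin.
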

\begin{proof}
 For the case 
$$f, g \in BV_{0}(\mathbb{R}),$$
let us define  $G(s,t):=\cos(st)f(t)$  in $\mathcal{L}^{1}_{loc}(\mathbb{R})$ with respect to $s$ for all $t\in \mathbb{R}$, where $[\alpha,\beta]$ is any compact interval such that $0\not\in [\alpha,\beta]$ and 
 let us consider the sequence $(\Phi_{n}) $ where 
\begin{equation}\label{Phin}
\Phi_{n}(s):=\int_{-n}^{n}\frac{d}{ds} G(s,t) dt
\end{equation} 
with $n\in \mathbb{N}$. Since 
\begin{equation*}
|\Phi_{n}(s)|\leq \frac{2 }{|s|}Var(g,\mathbb{R} ),
\end{equation*} 
then  $\Phi_{n}\subset L^{1}[\alpha,\beta]$, where $[\alpha,\beta]$ is any compact interval  such that $0\not\in [\alpha,\beta]$. Applying the Dominated Convergence Theorem, Fubini's Theorem and Hake's Theorem \cite{Bartle}, we get

\begin{eqnarray}\label{integral FS}
 \frac{1}{\sqrt{2\pi}}\int_{\alpha}^{\beta}\int_{-\infty}^{\infty}\frac{d}{ds} G(s,t)\; dt\; ds&=&
 \frac{1}{\sqrt{2\pi}}\int_{\alpha}^{\beta}\int_{-\infty}^{\infty}-\sin(st)g(t)\;dt\;ds\nonumber\\
&=& \frac{1}{\sqrt{2\pi}}\int_{\alpha}^{\beta}\lim_{n\rightarrow\infty} \Phi_{n}(s)\; ds\\
&=& \frac{1}{\sqrt{2\pi}} \lim_{n\rightarrow\infty} \int_{-n}^{n} f(t)\Big[\cos(\beta t)-\cos(\alpha t) \Big]dt\nonumber\\
&=& \mathcal{F}_{HK}^{C}(f)(\beta)-\mathcal{F}_{HK}^{C}(f)(\alpha).\nonumber
\end{eqnarray}
On the other hand,
\begin{eqnarray}
  \frac{1}{\sqrt{2\pi}}\int_{-\infty}^{\infty}\int_{\alpha}^{\beta} \frac{d}{ds} G(s,t)\; ds\; dt&=&
\frac{1}{\sqrt{2\pi}}\int_{-\infty}^{\infty} \cos(\alpha t)f(t)-\cos(\beta t)f(t)dt\nonumber\\&=&
\mathcal{F}_{HK}^{C}(f)(\beta)-\mathcal{F}_{HK}^{C}(f)(\alpha).\label{1} 
\end{eqnarray}
From (\ref{integral FS}), (\ref{1}) and  \cite[Theorem 4]{Talvila2},  we get that the $HK$-Cosine Fourier  transform is differentiable under the integral  sing. 
Since  $g\in BV_0(\mathbb{R})$, by Theorem \ref{PlancherelHK}, $\mathcal{F}_{HK}^{C}(f)'$ is a continuous function (except at $s=0$) vanishing at infinity. By similar arguments,  $\mathcal{F}_{HK}^{S}(f)'(s)=\mathcal{F}_{HK}^{C}(g)(s)$ for any $s\neq 0$. 

For the general case, we suppose $t f=g_1+g_2\in \mathcal{L}^1(\mathbb{R})+BV_{0}(\mathbb{R})$. Then (\ref{Phin}) with $g=g_1+g_2$  obeys also $\{\Phi_n\}\subset  L^1[\alpha,\beta],$ so that (\ref{integral FS}) remains valid.  Therefore, the $HK$-Fourier transform is differentiable  and (\ref{derivative od FKHFT}) is obtained. 
\end{proof}

\begin{cor}
Under the assumptions of Theorem \ref{derivation}.  Then
 $$\mathcal{F}^{S}_{HK}(f)\in ACG^{*}_{loc}(\mathbb{R}).$$
\end{cor}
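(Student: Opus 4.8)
The plan is to realize $\mathcal{F}^{S}_{HK}(f)$ as the indefinite Henstock--Kurzweil integral of a locally $HK$-integrable function and then invoke the descriptive characterization of the integral \cite{Bartle}: on a compact interval a function belongs to $ACG^{*}$ if and only if it is the indefinite $HK$-integral of an $HK$-integrable function. The candidate integrand is already supplied by the proof of Theorem \ref{derivation}, which establishes $(\mathcal{F}^{S}_{HK}(f))'(s)=\mathcal{F}^{C}_{HK}(g)(s)$ for every $s\neq 0$, where $g(t)=tf(t)$. First I would verify that this candidate derivative is locally integrable. Writing $g=g_{1}+g_{2}$ with $g_{1}\in\mathcal{L}^{1}(\mathbb{R})$ and $g_{2}\in BV_{0}(\mathbb{R})$, the transform $\mathcal{F}^{C}_{HK}(g_{1})$ reduces to the Lebesgue cosine transform of an $\mathcal{L}^{1}$ function, hence lies in $C_{\infty}(\mathbb{R})$ and is in particular locally integrable, while $\mathcal{F}^{C}_{HK}(g_{2})\in HK(\mathbb{R})$ by \cite{Alfredo}. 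Consequently $\mathcal{F}^{C}_{HK}(g)\in HK_{loc}(\mathbb{R})$.

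On any compact interval $[\alpha,\beta]$ with $0\notin[\alpha,\beta]$ there is essentially nothing to prove: by Theorem \ref{derivation} the function $\mathcal{F}^{S}_{HK}(f)$ is continuously differentiable there, hence absolutely continuous and a fortiori in $ACG^{*}[\alpha,\beta]$, and it agrees with $s\mapsto\mathcal{F}^{S}_{HK}(f)(\alpha)+\int_{\alpha}^{s}\mathcal{F}^{C}_{HK}(g)$. The only intervals requiring attention are those containing the origin, and the whole difficulty concentrates at the single point $s=0$. Because $\mathcal{F}^{C}_{HK}(g)\in HK_{loc}(\mathbb{R})$, Hake's Theorem \cite{Bartle} makes its indefinite integral continuous up to $0$, so the one-sided limits $\lim_{s\to 0^{\pm}}\mathcal{F}^{S}_{HK}(f)(s)$ exist; what must be decided is whether they equal the value $\mathcal{F}^{S}_{HK}(f)(0)=0$.

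Establishing this continuity at the origin is the main obstacle, and it is precisely where the hypothesis $g(t)=tf(t)\in\mathcal{L}^{1}(\mathbb{R})+BV_{0}(\mathbb{R})$ enters. The key is the factorization $\sin(st)f(t)=s\,\tfrac{\sin(st)}{st}\,g(t)$ for $t\neq 0$, where $\tfrac{\sin(st)}{st}$ is bounded by $1$ and tends to $1$ as $s\to 0$; equivalently $\mathcal{F}^{S}_{HK}(f)(s)=\tfrac{1}{\sqrt{2\pi}}\int_{\mathbb{R}}\tfrac{\sin(st)}{t}g(t)\,dt$. For the part $g_{1}\in\mathcal{L}^{1}(\mathbb{R})$ one gets $\big|\int_{\mathbb{R}}\tfrac{\sin(st)}{t}g_{1}(t)\,dt\big|\le|s|\,\|g_{1}\|_{1}$, an $O(|s|)$ bound. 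For the part $g_{2}\in BV_{0}(\mathbb{R})$ I would use that the Dirichlet kernel $t\mapsto\tfrac{\sin(st)}{t}$ is improperly Riemann integrable, hence in $HK(\mathbb{R})$, so that the Multiplier Theorem \cite{Bartle} applies and $\int_{\mathbb{R}}\tfrac{\sin(st)}{t}g_{2}(t)\,dt$ can be controlled by integration by parts against $g_{2}$, with its small-$s$ behaviour vanishing. Together these give $\lim_{s\to 0}\mathcal{F}^{S}_{HK}(f)(s)=0$, so $\mathcal{F}^{S}_{HK}(f)$ is continuous at the origin.

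With continuity on the whole line secured and the identity $(\mathcal{F}^{S}_{HK}(f))'=\mathcal{F}^{C}_{HK}(g)$ holding off the single-point set $\{0\}$, the Fundamental Theorem of Calculus for the Henstock--Kurzweil integral \cite{Bartle} applies on each compact interval and shows that $\mathcal{F}^{S}_{HK}(f)$ is the indefinite $HK$-integral of $\mathcal{F}^{C}_{HK}(g)\in HK_{loc}(\mathbb{R})$. By the descriptive characterization this places $\mathcal{F}^{S}_{HK}(f)$ in $ACG^{*}_{loc}(\mathbb{R})$, as claimed. The delicate passage is the crossing of $s=0$; away from the origin the conclusion is immediate from Theorem \ref{derivation}, so that is where I would concentrate the argument.
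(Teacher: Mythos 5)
Your argument is correct and arrives at the conclusion by a more self-contained route than the paper, whose proof is essentially three citations: Theorem \ref{derivation} for continuous differentiability of $\mathcal{F}^{S}_{HK}(f)$ away from zero, \cite[Corollary 1]{Alfredo} to place the derivative $\mathcal{F}^{C}_{HK}(g)$ in $HK_{loc}(\mathbb{R})$, and \cite[Theorem 2]{Talvila2} (the descriptive fundamental theorem) to conclude. You reconstruct exactly this skeleton --- the same derivative identity $(\mathcal{F}^{S}_{HK}(f))'=\mathcal{F}^{C}_{HK}(g)$, the same appeal to \cite{Alfredo} for local $HK$-integrability, the same characterization of $ACG^{*}$ as the class of indefinite $HK$-integrals --- but you make explicit a step the paper passes over in silence: on a compact interval containing the origin, $ACG^{*}$ membership forces continuity at $s=0$, and nothing in Theorem \ref{derivation} or the cited results settles the crossing of the origin. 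Your factorization $\sin(st)f(t)=\tfrac{\sin(st)}{t}\,g(t)$ with the splitting $g=g_{1}+g_{2}$ is the right device: the $\mathcal{L}^{1}$ part gives the $O(|s|)\,\|g_{1}\|_{1}$ bound you state, and for the $BV_{0}$ part the Multiplier Theorem's integration-by-parts formula reduces the integral to $-\int_{\mathbb{R}}H_{s}\,dg_{2}$ with $H_{s}(t)=\int_{-\infty}^{st}\tfrac{\sin v}{v}\,dv$, which is bounded uniformly in $s$ and $t$ and tends pointwise to $\pi/2$ as $s\to 0^{+}$, so that dominated convergence against the finite measure $|dg_{2}|$ yields the limit $\tfrac{\pi}{2}\bigl(g_{2}(\infty)-g_{2}(-\infty)\bigr)=0$. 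That computation is the one place where your write-up merely asserts ``its small-$s$ behaviour vanishing''; it should be carried out, since it is precisely where the hypothesis $tf\in\mathcal{L}^{1}(\mathbb{R})+BV_{0}(\mathbb{R})$ does real work and where the paper's own proof is thinnest. With that supplied, the fundamental theorem in the form ``continuous on $[a,b]$ and differentiable off a countable set'' closes the argument exactly as you describe, and your proof is in fact more complete than the one in the paper.
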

\begin{proof}  
Theorem \ref{derivation} implies that $\mathcal{F}^{S}_{HK}(f)$  is a continuously differentiable function away from zero, and \cite[Corollary 1]{Alfredo} yields that its derivate is actually a function in $HK_{loc}(\mathbb{R})$. Therefore  \cite[Theorem 2]{Talvila2} gives the result.  
\end{proof}

We will extend this theorem to study differentiability of the Fourier transform $\mathcal{F}_p(f)$ for $1\leq p\leq 2$.
\begin{lemma}\label{lemma1}
Suppose $1\leq p\leq 2$ and $f\in \mathcal{L}^p(\mathbb{R})$. Then there exists a subsequence $(n_k)\in\mathbb{N}$ such that
\begin{equation}\label{limit}
\frac{1}{\sqrt{2\pi}}  \lim_{k\to\infty}\int_{-n_k}^{n_k} e^{-is x}f(x) dx=\mathcal{F}_p(f)(s),\end{equation}
almost everywhere on $\mathbb{R}$.
\end{lemma}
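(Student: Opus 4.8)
The plan is to realize $\mathcal{F}_p(f)$ as the $\mathcal{L}^q$-limit of the truncated Fourier integrals and then to pass to an almost-everywhere convergent subsequence by the standard Riesz--Fischer argument. Here $q$ denotes the conjugate exponent, $p^{-1}+q^{-1}=1$, and I abbreviate the truncated transform by
\begin{equation*}
S_n(f)(s):=\frac{1}{\sqrt{2\pi}}\int_{-n}^{n}e^{-isx}f(x)\,dx.
\end{equation*}
First I would check that each $S_n(f)$ is an honest $\mathcal{L}^1$-Fourier transform. For fixed $n$ the truncation $f\chi_{[-n,n]}$ lies in $\mathcal{L}^1(\mathbb{R})\cap\mathcal{L}^p(\mathbb{R})$: it is $p$-integrable as a restriction of $f$, and H\"older's inequality on the bounded interval gives $\int_{-n}^{n}|f|\leq(2n)^{1/q}\Vert f\Vert_p<\infty$. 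Hence $S_n(f)=\mathcal{F}_1(f\chi_{[-n,n]})$; and since the two definitions of the transform agree on $\mathcal{L}^1(\mathbb{R})\cap\mathcal{L}^p(\mathbb{R})$ (take $f_1=f\chi_{[-n,n]}$ and $f_2=0$ in the definition of $\mathcal{F}_p$), we also have $S_n(f)=\mathcal{F}_p(f\chi_{[-n,n]})$.

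Next I would establish the norm convergence $S_n(f)\to\mathcal{F}_p(f)$ in $\mathcal{L}^q$. The Dominated Convergence Theorem yields $f\chi_{[-n,n]}\to f$ in $\mathcal{L}^p(\mathbb{R})$, because $|f-f\chi_{[-n,n]}|^p=|f|^p\chi_{\{|x|>n\}}$ tends to $0$ pointwise and is dominated by $|f|^p\in\mathcal{L}^1(\mathbb{R})$. Invoking the Hausdorff--Young inequality, i.e.\ the boundedness of $\mathcal{F}_p:\mathcal{L}^p(\mathbb{R})\to\mathcal{L}^q(\mathbb{R})$ for $1\leq p\leq 2$, and the linearity of $\mathcal{F}_p$, one obtains
\begin{equation*}
\Vert S_n(f)-\mathcal{F}_p(f)\Vert_q=\Vert\mathcal{F}_p(f\chi_{[-n,n]}-f)\Vert_q\leq C\,\Vert f\chi_{[-n,n]}-f\Vert_p\xrightarrow[n\to\infty]{}0.
\end{equation*}

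Finally, I would extract the subsequence. For $1<p\leq 2$ the conjugate exponent satisfies $q<\infty$, so convergence in $\mathcal{L}^q(\mathbb{R})$ forces the existence of a subsequence $(n_k)$ along which $S_{n_k}(f)(s)\to\mathcal{F}_p(f)(s)$ for almost every $s$, which is precisely (\ref{limit}). The borderline case $p=1$ is even simpler and may be disposed of separately: there $f\chi_{[-n,n]}\to f$ in $\mathcal{L}^1(\mathbb{R})$ and $\mathcal{F}_1$ maps continuously into $C_\infty(\mathbb{R})\subset\mathcal{L}^\infty(\mathbb{R})$, so $S_n(f)\to\mathcal{F}_1(f)$ uniformly and the full sequence already converges everywhere. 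The main obstacle, and the only genuinely delicate point, is that the a.e.-convergent subsequence is produced from $\mathcal{L}^q$-norm convergence \emph{only} because $q<\infty$ when $p>1$; the remaining ingredients---the consistency of $\mathcal{F}_p$ with $\mathcal{F}_1$ on $\mathcal{L}^1(\mathbb{R})\cap\mathcal{L}^p(\mathbb{R})$ and the continuity of $\mathcal{F}_p$---both rest on the Hausdorff--Young inequality and require no new work.
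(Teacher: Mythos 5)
Your proof is correct, but it follows a genuinely different route from the paper's. The paper works directly from the definition of $\mathcal{F}_p$: it decomposes $f=f_1+f_2$ with $f_1\in\mathcal{L}^1(\mathbb{R})\cap\mathcal{L}^p(\mathbb{R})$ and $f_2\in\mathcal{L}^2(\mathbb{R})\cap\mathcal{L}^p(\mathbb{R})$, observes that the truncated integrals of $f_1$ converge everywhere to $\mathcal{F}_1(f_1)(s)$ (dominated convergence), and extracts an a.e.-convergent subsequence only for the $f_2$ part from the $\mathcal{L}^2$-limit definition of $\mathcal{F}_2$; the only external inputs are the endpoint cases $p=1,2$ cited from Reed--Simon and Rudin. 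You avoid the decomposition entirely: you truncate $f$ itself, identify $S_n(f)=\mathcal{F}_p(f\chi_{[-n,n]})$, and obtain $\mathcal{L}^q$-norm convergence of the whole sequence $S_n(f)\to\mathcal{F}_p(f)$ via the Hausdorff--Young inequality, after which Riesz--Fischer yields the a.e.-convergent subsequence (with the $p=1$ endpoint handled by uniform convergence). Your route buys a stronger intermediate conclusion --- norm convergence of the full sequence, not merely a.e.\ convergence along a subsequence --- and a single uniform argument for all $1<p\leq 2$, at the price of invoking Hausdorff--Young (i.e.\ Riesz--Thorin interpolation) as a black box; the paper's route uses only the two endpoint theories that already underlie its definition of $\mathcal{F}_p$, and is in that sense more self-contained relative to its own setup. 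One small point worth making explicit in your write-up: the identity $\mathcal{F}_p(f\chi_{[-n,n]})=\mathcal{F}_1(f\chi_{[-n,n]})$ relies on the well-definedness of $\mathcal{F}_p$ (independence of the decomposition $f=f_1+f_2$), which the paper's definition tacitly assumes; you use it correctly by choosing $f_2=0$.
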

\begin{proof} The cases $p=1$ or $p=2$ follow from \cite{Reed, Rudin}. For $1<p<2$,    due to {\cite{Grafakos,M.Guadalupe, Pinsky}} there exist functions  $f_1\in \mathcal{L}^1(\mathbb{R})\cap \mathcal{L}^p(\mathbb{R}), \ f_2\in \mathcal{L}^2(\mathbb{R})\cap \mathcal{L}^p(\mathbb{R})$ such that $f=f_1+f_2$. It follows that
$$\mathcal{F}_p(f)=\mathcal{F}_1(f_1)+\mathcal{F}_2(f_2).$$
Applying once again \cite{Reed, Rudin}, we obtain a sequence $ (n_k)\subset \mathbb{N}$ such that
$$ \frac{1}{\sqrt{2\pi}}\lim_{k\to\infty}\int_{-n_k}^{n_k} e^{-is x}f(x) dx=\mathcal{F}_2(f_2)(s)$$
almost everywhere in $\mathbb{R}$. This yields,
\begin{eqnarray}
\frac{1}{\sqrt{2\pi}}\lim_{k\to\infty}\int_{-n_k}^{n_k} e^{-is x}f(x) dx &=& \frac{1}{\sqrt{2\pi}}\lim_{k\to\infty}\int_{-n_k}^{n_k} e^{-is x}f_1(x) dx\nonumber  \\ &+& \frac{1}{\sqrt{2\pi}}\lim_{k\to\infty}\int_{-n_k}^{n_k} e^{-is x}f_2(x) dx\nonumber\\
&=&\mathcal{F}_1(f_1)(s)+\mathcal{F}_2(f_2)(s)\nonumber  \\
&=&\mathcal{F}_p(f)(s),\nonumber
\end{eqnarray}
almost everywhere. This proves the statement.
\end{proof}
Below we use the notation $p^{-1}+q^{-1}=1$.

\begin{prop}\label{Propo1}
Let $1\leq p\leq 2$ be fixed. If $f\in \mathcal{L}^p(\mathbb{R})$ and $g(t):=tf(t)$ belongs to $\mathcal{L}^1(\mathbb{R})+ BV_0(\mathbb{R})$,  then by redefining   $\mathcal{F}_{p}(f)(s)$ on a set of measure zero, it yields 
\begin{equation*}
\label{derivative od FPFT} \frac{d}{ds}\mathcal{F}_{p}(f)(s)=-i\mathcal{F}_{HK}(g)(s),  \quad ( s\not=0).
\end{equation*}
\end{prop}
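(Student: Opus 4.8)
The plan is to reduce the statement to two results already established for the $HK$-Fourier transform: Theorem \ref{derivation}, which supplies the differentiation formula for $\mathcal{F}_{HK}$ away from zero, and Theorem \ref{PlancherelHK}, which identifies $\mathcal{F}_p$ with $\mathcal{F}_{HK}$ almost everywhere. To invoke Theorem \ref{derivation} for $f$ itself, I first have to check that $f$ meets its hypotheses, i.e. that $f \in \mathcal{L}^1(\mathbb{R}) + BV_0(\mathbb{R})$; the membership $g = tf \in \mathcal{L}^1(\mathbb{R}) + BV_0(\mathbb{R})$ is assumed. Hence the core of the argument is to recover this regularity of $f$ from the regularity of $g$ together with $f \in \mathcal{L}^p(\mathbb{R})$.

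The key step is a division-by-$t$ argument, valid uniformly for $1\leq p\leq 2$. Write $g = g_1 + g_2$ with $g_1 \in \mathcal{L}^1(\mathbb{R})$ and $g_2 \in BV_0(\mathbb{R})$. On the compact interval $[-1,1]$ one has $f \in \mathcal{L}^p([-1,1]) \subset \mathcal{L}^1([-1,1])$, so $f\,\mathbf{1}_{[-1,1]} \in \mathcal{L}^1(\mathbb{R})$. For $|t| \geq 1$ I use $f(t) = g(t)/t = g_1(t)/t + g_2(t)/t$. Since $|1/t| \leq 1$ there, $(g_1/t)\,\mathbf{1}_{\{|t|\geq 1\}} \in \mathcal{L}^1(\mathbb{R})$; and because $1/t$ is bounded and monotone on each of $[1,\infty)$ and $(-\infty,-1]$ while $g_2$ is bounded and of bounded variation, the product $g_2/t$ is of bounded variation on each half-line and vanishes at infinity, whence $(g_2/t)\,\mathbf{1}_{\{|t| \geq 1\}} \in BV_0(\mathbb{R})$ (the two jumps at $t = \pm 1$ add only finite variation). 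Collecting terms gives $f = h_1 + h_2$ with $h_1 \in \mathcal{L}^1(\mathbb{R})$ and $h_2 \in BV_0(\mathbb{R})$, i.e. $f \in \mathcal{L}^1(\mathbb{R}) + BV_0(\mathbb{R})$.

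With this secured, Theorem \ref{derivation} applies to $f$ and yields that $\mathcal{F}_{HK}(f)$ is continuously differentiable away from zero with $\frac{d}{ds}\mathcal{F}_{HK}(f)(s) = -i\,\mathcal{F}_{HK}(g)(s)$ for $s \neq 0$. On the other hand, $f \in \mathcal{L}^p(\mathbb{R}) \cap (\mathcal{L}^1(\mathbb{R}) + BV_0(\mathbb{R}))$, so Theorem \ref{PlancherelHK} gives $\mathcal{F}_p(f)(s) = \mathcal{F}_{HK}(f)(s)$ for almost every $s$; Lemma \ref{lemma1} can be used to realize this identification through the subsequence limit of truncated integrals, whose full limit equals $\mathcal{F}_{HK}(f)(s)$ at every $s\neq 0$ by Hake's Theorem. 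Consequently, after redefining $\mathcal{F}_p(f)$ on the null set where it differs from the continuous representative $\mathcal{F}_{HK}(f)$, the function $\mathcal{F}_p(f)$ becomes continuously differentiable on $\mathbb{R}\setminus\{0\}$ and inherits the formula $\frac{d}{ds}\mathcal{F}_p(f)(s) = -i\,\mathcal{F}_{HK}(g)(s)$.

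I expect the only genuine work to lie in the second paragraph: confirming that the division $f = g/t$ lands in $\mathcal{L}^1(\mathbb{R}) + BV_0(\mathbb{R})$ near infinity, and in particular that $g_2/t$ remains of bounded variation on the unbounded half-lines and vanishes at infinity. This is where the boundedness of $g_2$ and the monotonicity of $1/t$ must be invoked explicitly. Once $f \in \mathcal{L}^1(\mathbb{R}) + BV_0(\mathbb{R})$ is established, the conclusion is a direct assembly of Theorem \ref{derivation} and Theorem \ref{PlancherelHK}, the latter guaranteeing that the pointwise differentiable object $\mathcal{F}_{HK}(f)$ is precisely the almost-everywhere representative of $\mathcal{F}_p(f)$.
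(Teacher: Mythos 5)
Your argument is correct, but it takes a genuinely different route from the paper's. The paper never shows that $f$ itself lies in $\mathcal{L}^1(\mathbb{R})+BV_0(\mathbb{R})$; instead it fixes $\alpha,\beta$ in the full-measure set where Lemma \ref{lemma1} holds, integrates $-i\mathcal{F}_{HK}(g)$ over $[\alpha,\beta]$, and repeats the Fubini/dominated-convergence interchange from the proof of Theorem \ref{derivation} to get the fundamental-theorem identity $-i\int_{\alpha}^{\beta}\mathcal{F}_{HK}(g)(s)\,ds=\mathcal{F}_p(f)(\beta)-\mathcal{F}_p(f)(\alpha)$ for almost every pair $\alpha,\beta$, from which differentiability of a suitable representative follows. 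You instead front-load the work into the observation that the hypotheses already force $f\in\mathcal{L}^1(\mathbb{R})+BV_0(\mathbb{R})$: near the origin $f\in\mathcal{L}^p\subset\mathcal{L}^1_{loc}$, and for $|t|\geq 1$ the decomposition $g=g_1+g_2$ divides through by $t$, with $g_1/t$ staying integrable and $g_2/t$ staying in $BV_0$ because $1/t$ is bounded and monotone on each half-line (the product estimate $Var(uv)\leq \Vert u\Vert_\infty Var(v)+\Vert v\Vert_\infty Var(u)$ for bounded $BV$ functions does the rest). This reduces the proposition to a literal application of Theorem \ref{derivation} followed by the a.e.\ identification $\mathcal{F}_p(f)=\mathcal{F}_{HK}(f)$ from Theorem \ref{PlancherelHK}. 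Your route buys a cleaner structural conclusion --- $\mathcal{F}_{HK}(f)$ is defined pointwise for every $s\neq 0$ and is the continuously differentiable representative of $\mathcal{F}_p(f)$ --- and avoids re-running the limit interchange; the paper's route avoids the division-by-$t$ lemma (which it never states) and provides the template that still works for Proposition \ref{derivada in Lp}, where $tf\in\mathcal{L}^p(\mathbb{R})+BV_0(\mathbb{R})$ and no such reduction of $f$ is available. The only point worth making explicit in your write-up is that the identity $f=h_1+h_2$ obtained by division holds a.e.\ rather than everywhere; this is harmless because only the $\mathcal{L}^1$ summand need be adjusted on a null set.
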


\begin{proof} Take values  $s=\alpha,$ and $s=\beta$ such that (\ref{limit})  is valid. Suppose that $0<\alpha<\beta$. 
Proceeding similarly as in Theorem \ref{derivation}, we have 
\begin{eqnarray}
-i\int_{\alpha}^{\beta}\mathcal{F}_{HK}(g)(s)ds&=& 
 \int_{\alpha}^{\beta} \lim_{k\rightarrow\infty} \frac{ -i}{\sqrt{2\pi}}  \int_{-n_{k}}^{n_{k}}e^{-ist}tf(t)dtds \nonumber\\
&=&  \frac{-i}{\sqrt{2\pi}}\lim_{k\rightarrow\infty}\int_{-n_{k}}^{n_{k}}\int_{\alpha}^{\beta}e^{-ist}tf(t)dsdt \nonumber\\
&=& \frac{-i}{\sqrt{2\pi}}\lim_{k\rightarrow\infty}\int_{-n_{k}}^{n_{k}} (e^{-i\beta t}-e^{-i\alpha t})f(t))dt \nonumber\\
&=&\mathcal{F}_{p}(f)(\beta)-\mathcal{F}_{p}(f)(\alpha),\label{Fp}
\end{eqnarray}
where (\ref{Fp}) holds almost everywhere by Lemma \ref{lemma1}. This implies the statement of the proposition.
\end{proof}

\begin{cor} Assume $f\in \mathcal{L}^{p}(\mathbb{R})$ and $tf\in\mathcal{L}^p(\mathbb{R})\cap BV_0(\mathbb{R})$.  Then, by redefining  $\mathcal{F}_{p}^{S}(f)$  on a set of measure zero yields 
 
 $$\mathcal{F}_{p}^{S}(f)\in ACG^{*}(\mathbb{R}).$$
\end{cor}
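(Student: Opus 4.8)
The plan is to reduce the statement to Proposition \ref{Propo1} combined with the \emph{global} mapping property of the $HK$-cosine transform, and then to invoke the characterization of indefinite Henstock--Kurzweil integrals as $ACG^{*}$ functions. First I would verify that the hypotheses of Proposition \ref{Propo1} are met: since $tf \in \mathcal{L}^p(\mathbb{R}) \cap BV_0(\mathbb{R})$ and $BV_0(\mathbb{R}) \subset \mathcal{L}^1(\mathbb{R}) + BV_0(\mathbb{R})$, the function $g(t):=tf(t)$ lies in $\mathcal{L}^1(\mathbb{R}) + BV_0(\mathbb{R})$, so the proposition applies. Taking imaginary parts in the identity $\frac{d}{ds}\mathcal{F}_p(f)(s) = -i\,\mathcal{F}_{HK}(g)(s)$ and using $\mathcal{F}_{HK}(g) = \mathcal{F}^{C}_{HK}(g) - i\,\mathcal{F}^{S}_{HK}(g)$, I obtain, after the admissible redefinition of $\mathcal{F}_p^{S}(f)$ on a null set,
\[ \frac{d}{ds}\mathcal{F}_p^{S}(f)(s) = \mathcal{F}^{C}_{HK}(g)(s), \qquad (s \neq 0). \]

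The decisive point, and precisely the reason the conclusion is global $ACG^{*}(\mathbb{R})$ rather than merely $ACG^{*}_{loc}(\mathbb{R})$ as in the corollary to Theorem \ref{derivation}, is that here $g=tf$ belongs to $BV_0(\mathbb{R})$ itself, not just to $\mathcal{L}^1(\mathbb{R})+BV_0(\mathbb{R})$. Hence by \cite[Corollary 1]{Alfredo} --- the boundedness of the $HK$-cosine transform from $BV_0(\mathbb{R})$ into $HK(\mathbb{R})$ --- the derivative $\mathcal{F}^{C}_{HK}(g)$ is Henstock--Kurzweil integrable on all of $\mathbb{R}$, and not only locally. Thus $\mathcal{F}_p^{S}(f)$ is differentiable away from the single point $s=0$ with a globally $HK$-integrable derivative, and I would then apply \cite[Theorem 2]{Talvila2} to conclude $\mathcal{F}_p^{S}(f)\in ACG^{*}(\mathbb{R})$.

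The hard part will be the exceptional point $s=0$: the differentiation formula holds only for $s\neq 0$, whereas membership in $ACG^{*}(\mathbb{R})$ demands good behavior on every interval, including those straddling the origin. Integrating the derivative yields $\mathcal{F}_p^{S}(f)=H+c_{\pm}$ on each half-line, where $H(s):=\int_{0}^{s}\mathcal{F}^{C}_{HK}(g)$ is continuous and lies in $ACG^{*}(\mathbb{R})$, so the issue is to exclude a jump at $0$, i.e.\ to show that the two constants coincide. I would settle this by combining the oddness of the sine transform in $s$ (which forces $c_{-}=-c_{+}$) with the vanishing of $\mathcal{F}_p^{S}(f)$ at the origin. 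For the latter I would observe that one can split $f=f\chi_{\{|t|\le 1\}}+f\chi_{\{|t|>1\}}$, the first summand lying in $\mathcal{L}^1(\mathbb{R})$ and the second in $BV_0(\mathbb{R})$ because $t^{-1}\chi_{\{|t|>1\}}$ is of bounded variation and $g\in BV_0(\mathbb{R})$; hence $f\in\mathcal{L}^1(\mathbb{R})+BV_0(\mathbb{R})$ and Theorem \ref{PlancherelHK} gives $\mathcal{F}_p^{S}(f)=\mathcal{F}^{S}_{HK}(f)$ a.e. The limit of $\mathcal{F}^{S}_{HK}(f)$ at $0$ is then $0$, by dominated convergence on the $\mathcal{L}^1$ part and by the Stieltjes estimate of \cite{Francisco} on the $BV_0$ part, so $c_{+}=c_{-}=0$ and $\mathcal{F}_p^{S}(f)=H\in ACG^{*}(\mathbb{R})$ after redefinition on a null set. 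This also delivers the effective representation $\mathcal{F}_p^{S}(f)(s)=\int_{0}^{s}\mathcal{F}^{C}_{HK}(tf)$, valid for all $s\in\mathbb{R}$.
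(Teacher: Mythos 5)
Your proposal takes the same route as the paper: the paper's entire proof of this corollary is the one-line citation of Proposition \ref{Propo1}, \cite[Corollary 1]{Alfredo} and \cite[Theorem 2]{Talvila2}, and those are precisely the three ingredients you assemble. You go beyond the paper in two useful ways: you write out the identity $\frac{d}{ds}\mathcal{F}^{S}_{p}(f)(s)=\mathcal{F}^{C}_{HK}(g)(s)$ obtained by separating real and imaginary parts, and, more importantly, you observe that the differentiation formula is silent at $s=0$, so that the \emph{global} conclusion $\mathcal{F}^{S}_{p}(f)\in ACG^{*}(\mathbb{R})$ (as opposed to the merely local statement in Corollary \ref{Coro2}) requires excluding a jump of $\mathcal{F}^{S}_{p}(f)$ at the origin. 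The paper does not address this point at all, so your extra paragraph is a genuine gain in rigor, and your reduction of the jump question to the single claim $\mathcal{F}^{S}_{HK}(f)(0^{+})=0$ via oddness is correct.

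The one step that does not hold up as written is the justification of that claim for the $BV_{0}$ tail. The ``Stieltjes estimate'' of \cite{Francisco} bounds the sine transform of a function $\phi\in BV_{0}(\mathbb{R})$ by a constant times $Var(\phi,\mathbb{R})/|s|$; this blows up as $s\to 0$ and gives no information at the origin. Moreover the assertion is simply false for a general $BV_{0}$ function: taking $\phi(t)=t^{-1}\chi_{[1,\infty)}(t)\in BV_{0}(\mathbb{R})$ one has
\[
\int_{1}^{\infty}\frac{\sin(st)}{t}\,dt=\int_{s}^{\infty}\frac{\sin u}{u}\,du\;\longrightarrow\;\frac{\pi}{2}\neq 0 \qquad (s\to 0^{+}).
\]
What rescues your argument is the special structure of the tail in this corollary: on $\{|t|>1\}$ you have $f(t)=g(t)/t$ with $g=tf\in BV_{0}(\mathbb{R})$ vanishing at infinity. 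Decomposing $g$ on each half-line as a difference of monotone functions decreasing to $0$ and applying the second mean value theorem (Dirichlet test) to $\int\sin(st)\,g(t)t^{-1}\,dt$ --- for instance, splitting at $|t|=1/|s|$, the inner piece is bounded by $|s|\int_{1}^{1/|s|}|g|$ and the outer piece by $2\,|g(1/|s|)|$, both of which tend to $0$ --- one does obtain $\mathcal{F}^{S}_{HK}(f)(0^{\pm})=0$. It is the decay of $g$ at infinity, not merely its bounded variation, that makes this work. With that repair (the dominated-convergence argument on the $\mathcal{L}^{1}$ part is fine as stated), your conclusion $c_{+}=c_{-}=0$ and hence the global $ACG^{*}(\mathbb{R})$ statement go through.
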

\begin{proof}
This follows from Proposition \ref{Propo1},  \cite[Corollary 1]{Alfredo} and  \cite[Theorem 2]{Talvila2}.
\end{proof}


\begin{prop}\label{derivada in Lp} Let $1< p\leq 2$ be fixed, $f\in \mathcal{L}^p(\mathbb{R})$ and $tf=h_{p}+h_{0} \in \mathcal{L}^{p}(\mathbb{R})+BV_0(\mathbb{R})$.  Then, by redefining  $\mathcal{F}_{p}(f)(s)$ on a set of measure zero, it yields  
$$\mathcal{F}_{p}(f) \in AC_{loc}(\mathbb{R}\setminus \{0\})\cap \mathcal{L}^{q}(\mathbb{R})$$ and
\begin{equation*}
\label{derivative od FPFT} \frac{d}{ds}\mathcal{F}_{p}(f)(s)=-i[\mathcal{F}_{p}(h_{p})(s)+\mathcal{F}_{HK}(h_{0})(s)], \quad a.e.
\end{equation*}

\end{prop}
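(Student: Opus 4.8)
The plan is to follow the scheme of Proposition \ref{Propo1}: I would integrate the claimed right-hand side over a compact interval $[\alpha,\beta]$ with $0\notin[\alpha,\beta]$ and show that it telescopes to $\mathcal{F}_p(f)(\beta)-\mathcal{F}_p(f)(\alpha)$, so that $\mathcal{F}_p(f)$ is an indefinite integral away from the origin. The membership $\mathcal{F}_p(f)\in\mathcal{L}^q(\mathbb{R})$ is immediate from the definition of $\mathcal{F}_p\colon\mathcal{L}^p\to\mathcal{L}^q$ (Hausdorff--Young). Writing $\psi:=-i[\mathcal{F}_p(h_p)+\mathcal{F}_{HK}(h_0)]$, note that $\mathcal{F}_p(h_p)\in\mathcal{L}^q(\mathbb{R})$ and, by Theorem \ref{PlancherelHK}, $\mathcal{F}_{HK}(h_0)\in C_\infty(\mathbb{R}\setminus\{0\})$; hence $\psi\in\mathcal{L}^1_{loc}(\mathbb{R}\setminus\{0\})$ and its indefinite integral is absolutely continuous on each component of $\mathbb{R}\setminus\{0\}$. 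As in Proposition \ref{Propo1}, I would fix $0<\alpha<\beta$, the component $(-\infty,0)$ being entirely analogous.

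For the bounded-variation summand $h_0$ I would reproduce the computation of Proposition \ref{Propo1} and Theorem \ref{derivation}. Using Hake's Theorem, $\mathcal{F}_{HK}(h_0)(s)=\frac{1}{\sqrt{2\pi}}\lim_{T\to\infty}\int_{-T}^{T}e^{-ist}h_0(t)\,dt$; a Riemann--Stieltjes integration by parts as in Theorem \ref{derivation} yields a uniform bound $\bigl|\int_{-T}^{T}e^{-ist}h_0(t)\,dt\bigr|\le \frac{C}{|s|}Var(h_0,\mathbb{R})\le\frac{C}{\alpha}Var(h_0,\mathbb{R})$ on $[\alpha,\beta]$, so the Dominated Convergence Theorem lets me interchange $\int_\alpha^\beta$ with $\lim_T$, and Fubini on the compact rectangle $[\alpha,\beta]\times[-T,T]$ gives
\[
-i\int_\alpha^\beta\mathcal{F}_{HK}(h_0)(s)\,ds=\frac{1}{\sqrt{2\pi}}\lim_{T\to\infty}\int_{-T}^{T}\frac{h_0(t)}{t}\bigl(e^{-i\beta t}-e^{-i\alpha t}\bigr)\,dt,
\]
the integrand having a removable singularity at $t=0$.

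The genuinely new difficulty is the summand $h_p\in\mathcal{L}^p(\mathbb{R})$ with $p>1$: in contrast to the $\mathcal{L}^1$ part treated in Proposition \ref{Propo1}, the truncated integrals $\int_{-N}^{N}e^{-ist}h_p(t)\,dt$ need not be uniformly bounded in $N$ on $[\alpha,\beta]$, so a direct interchange of limit and $s$-integration is unavailable; this is the step I expect to be the main obstacle. To circumvent it I would evaluate $\int_\alpha^\beta\mathcal{F}_p(h_p)(s)\,ds$ by the multiplication (Parseval) formula $\int\mathcal{F}_p(h_p)\,v=\int h_p\,\mathcal{F}_1(v)$, valid on $\mathcal{L}^p$ for $1\le p\le2$, with $v=\chi_{[\alpha,\beta]}$. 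This pairing is legitimate because $v\in\mathcal{L}^1\cap\mathcal{L}^p$, $\mathcal{F}_p(h_p)\in\mathcal{L}^q$, and $\mathcal{F}_1(v)(t)=\frac{1}{\sqrt{2\pi}}\cdot\frac{e^{-i\beta t}-e^{-i\alpha t}}{-it}\in\mathcal{L}^q(\mathbb{R})$ (here $q>1$, which is where $p\le2$ enters), so both sides converge by H\"older. It gives
\[
-i\int_\alpha^\beta\mathcal{F}_p(h_p)(s)\,ds=\frac{1}{\sqrt{2\pi}}\int_{\mathbb{R}}\frac{h_p(t)}{t}\bigl(e^{-i\beta t}-e^{-i\alpha t}\bigr)\,dt,
\]
an absolutely convergent Lebesgue integral.

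Adding the two contributions and using $h_p(t)+h_0(t)=tf(t)$, the factors $1/t$ combine and I obtain
\[
\int_\alpha^\beta\psi(s)\,ds=\frac{1}{\sqrt{2\pi}}\lim_{T\to\infty}\int_{-T}^{T}f(t)\bigl(e^{-i\beta t}-e^{-i\alpha t}\bigr)\,dt,
\]
where the full limit exists because the $h_0$-limit exists by Hake's Theorem and the $h_p$-integral converges absolutely. Restricting to the subsequence $(n_k)$ furnished by Lemma \ref{lemma1} and splitting the difference, for almost every $\alpha$ and $\beta$ this equals $\mathcal{F}_p(f)(\beta)-\mathcal{F}_p(f)(\alpha)$. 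Consequently $\mathcal{F}_p(f)(\beta)-\mathcal{F}_p(f)(\alpha)=\int_\alpha^\beta\psi$ for almost all $\alpha,\beta$ in each component of $\mathbb{R}\setminus\{0\}$, so $\mathcal{F}_p(f)$ coincides almost everywhere with the absolutely continuous primitive of $\psi$. Redefining $\mathcal{F}_p(f)$ on the corresponding null set makes it an element of $AC_{loc}(\mathbb{R}\setminus\{0\})$ whose almost-everywhere derivative is $\psi=-i[\mathcal{F}_p(h_p)+\mathcal{F}_{HK}(h_0)]$, which is the asserted formula.
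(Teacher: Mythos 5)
Your argument is correct and follows the same skeleton as the paper's proof: integrate the proposed derivative over $[\alpha,\beta]$ with $0\notin[\alpha,\beta]$, split $tf=h_p+h_0$, telescope, and invoke Lemma \ref{lemma1} to identify the endpoint values of $\mathcal{F}_p(f)$. The one place where you genuinely diverge is the $\mathcal{L}^p$ summand. The paper handles $h_p$ by observing that the truncations $\frac{1}{\sqrt{2\pi}}\int_{-n}^{n}e^{-ist}h_p(t)\,dt$ converge to $\mathcal{F}_p(h_p)$ in $\mathcal{L}^q$ (hence are uniformly bounded in $\|\cdot\|_q$), so that pairing with $\chi_{[\alpha,\beta]}\in\mathcal{L}^p$ lets the limit pass through $\int_\alpha^\beta$, after which Fubini on the compact rectangles $[\alpha,\beta]\times[-n_k,n_k]$ does the rest; everything stays inside one truncation scheme along the subsequence $(n_k)$. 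You instead evaluate $\int_\alpha^\beta\mathcal{F}_p(h_p)$ in closed form via the multiplication formula with $v=\chi_{[\alpha,\beta]}$, obtaining $\frac{1}{\sqrt{2\pi}}\int_{\mathbb{R}}\frac{h_p(t)}{t}(e^{-i\beta t}-e^{-i\alpha t})\,dt$ as an absolutely convergent integral. The two are really the same computation — your Parseval identity is precisely the $n\to\infty$ limit of the paper's Fubini step, legitimized by the same $\mathcal{L}^q$ convergence of truncated transforms — but your version makes the interchange airtight and isolates exactly where $p\le 2$ enters (namely $\mathcal{F}_1(\chi_{[\alpha,\beta]})\in\mathcal{L}^q$ with $q>1$), whereas the paper's write-up leaves the justification of its first equality rather terse. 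Your explicit removable-singularity remark at $t=0$ and the recombination $h_p+h_0=tf$ match the paper's computation, and your final passage from the a.e.\ telescoping identity to membership in $AC_{loc}(\mathbb{R}\setminus\{0\})$ after redefinition on a null set is the intended conclusion. In short: same route, with a cleaner and more self-contained treatment of the only delicate interchange.
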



\medskip
\begin{proof}

 Due to $$ \frac{1}{\sqrt{2\pi}}\int_{-n}^{n}e^{-ist}h_{p}(t) dt\rightarrow\mathcal{F}_{p}(h_{p}) \, \, (n\rightarrow\infty),$$ there exists $M>0$ such that
$$\Big\| \frac{1}{\sqrt{2\pi}}\int_{-n}^{n}e^{-ist}h_{p}(t) dt \Big\|_{q}\leq M<\infty $$ uniformly on $n\in \mathbb{N}$. As argued in equation (\ref{integral FS}), 


\begin{eqnarray*}
-i\int_{\alpha}^{\beta}\mathcal{F}_{p}(h_{p})(s)+\mathcal{F}_{HK}(h_{0})(s)ds&=& 
\frac{-i}{\sqrt{2\pi}}\left[\lim_{k\rightarrow\infty} \int_{-n_k}^{n_k}\int_{\alpha}^{\beta} e^{-ist}h_{p}(t) dsdt + \lim_{k\rightarrow\infty} \int_{-n_k}^{n_k}\int_{\alpha}^{\beta} e^{-ist}h_{0}(t) dsdt\right]\nonumber \\
& =&\frac{-i}{\sqrt{2\pi}}\lim_{k\rightarrow\infty} \int_{-n_k}^{n_k}\int_{\alpha}^{\beta} e^{-ist}(h_{p}+h_{0})(s)dsdt\nonumber\\
&=&\frac{-i}{\sqrt{2\pi}} \lim_{k\rightarrow\infty} \int_{-n_k}^{n_k}\int_{\alpha}^{\beta} e^{-ist}tf(t)dsdt\nonumber\\
&=&\frac{1}{\sqrt{2\pi}} \lim_{k\rightarrow\infty} \int_{-n_k}^{n_k}( e^{-i\beta t}-e^{-i\alpha t} )f(t)dt\nonumber \\
&=& \mathcal{F}_{p}(f)(\beta)-\mathcal{F}_{p}(f)(\alpha).
\end{eqnarray*}
Where we take a subsequence of $(n_{k})$, if neccessary. Here  $\alpha$, $\beta$ are values such that (\ref{limit}) is valid. 

\end{proof}


\begin{cor}\label{Coro2} Assume the hypothesis of Proposition \ref{derivada in Lp}.  Then, by redefining  $\mathcal{F}^{S}_{p}(f)$ on a set of measure zero      
 $$\mathcal{F}_{p}^{S}(f)\in ACG_{loc}^*(\mathbb{R}).$$
\end{cor}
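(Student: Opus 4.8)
The plan is to reduce the claim, exactly as in the two preceding corollaries, to the Fundamental Theorem of Calculus for the Henstock--Kurzweil integral recorded in \cite[Theorem 2]{Talvila2}: a function belongs to $ACG^{*}_{loc}(\mathbb{R})$ precisely when, on each compact interval, it is the indefinite $HK$-integral of a function in $HK_{loc}(\mathbb{R})$. Hence it suffices to identify the derivative of $\mathcal{F}^{S}_{p}(f)$, to show that this derivative lies in $HK_{loc}(\mathbb{R})$, and to confirm that $\mathcal{F}^{S}_{p}(f)$ recovers it by integration.

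First I would isolate the sine part of Proposition \ref{derivada in Lp}. Since $\tfrac{d}{ds}\sin(st)=t\cos(st)$, matching the coefficient of $-i$ in the derivative formula of Proposition \ref{derivada in Lp} (which is what the integral identity in its proof encodes) gives, for the redefined representative and every $s\neq 0$,
\[
\frac{d}{ds}\mathcal{F}^{S}_{p}(f)(s)=\mathcal{F}^{C}_{p}(h_{p})(s)+\mathcal{F}^{C}_{HK}(h_{0})(s),
\]
together with the companion relation $\mathcal{F}^{S}_{p}(f)(\beta)-\mathcal{F}^{S}_{p}(f)(\alpha)=\int_{\alpha}^{\beta}[\mathcal{F}^{C}_{p}(h_{p})+\mathcal{F}^{C}_{HK}(h_{0})]\,ds$ for the admissible endpoints furnished by Lemma \ref{lemma1}. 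Thus the candidate for the derivative is the $HK$-Cosine transform of $tf=h_{p}+h_{0}$.

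Next I would check the integrability of this candidate. Since $h_{p}\in\mathcal{L}^{p}(\mathbb{R})$, the mapping property of $\mathcal{F}_{p}$ yields $\mathcal{F}_{p}(h_{p})\in\mathcal{L}^{q}(\mathbb{R})$, whence $\mathcal{F}^{C}_{p}(h_{p})\in\mathcal{L}^{q}(\mathbb{R})\subset\mathcal{L}^{1}_{loc}(\mathbb{R})\subset HK_{loc}(\mathbb{R})$; and since $h_{0}\in BV_{0}(\mathbb{R})$, \cite[Corollary 1]{Alfredo} gives $\mathcal{F}^{C}_{HK}(h_{0})\in HK(\mathbb{R})\subset HK_{loc}(\mathbb{R})$. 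Adding, the derivative $\mathcal{F}^{C}_{p}(h_{p})+\mathcal{F}^{C}_{HK}(h_{0})$ lies in $HK_{loc}(\mathbb{R})$, so by the companion relation $\mathcal{F}^{S}_{p}(f)$ is, up to redefinition on a null set, its indefinite $HK$-integral; \cite[Theorem 2]{Talvila2} then delivers $\mathcal{F}^{S}_{p}(f)\in ACG^{*}_{loc}(\mathbb{R})$.

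The delicate point, and the one that genuinely distinguishes this statement from the $AC_{loc}(\mathbb{R}\setminus\{0\})$ conclusion of Proposition \ref{derivada in Lp}, is the behaviour at the origin. The integral identity of Proposition \ref{derivada in Lp} was obtained for $0<\alpha<\beta$, so I must verify that recovery-by-integration persists on compact intervals straddling $0$. This is possible because the candidate derivative is $HK_{loc}$ across $0$ --- $\mathcal{F}^{C}_{HK}(h_{0})$ is globally $HK$-integrable by \cite[Corollary 1]{Alfredo} and $\mathcal{F}^{C}_{p}(h_{p})$ is locally integrable everywhere --- and because the sine transform, unlike the cosine transform responsible for the irregularity of the full transform at $0$, is continuous there (its defining integrand vanishes at $s=0$). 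It is precisely here that passing from $AC$ to the wider class $ACG^{*}$, which tolerates the single exceptional point $s=0$, becomes essential.
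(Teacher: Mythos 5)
Your proof is correct and follows essentially the same route as the paper, whose entire proof of this corollary is ``Similar arguments as above give the result'' --- i.e.\ the chain you spell out: identify $\tfrac{d}{ds}\mathcal{F}^{S}_{p}(f)$ as $\mathcal{F}^{C}_{p}(h_{p})+\mathcal{F}^{C}_{HK}(h_{0})$, place it in $HK_{loc}(\mathbb{R})$ via $\mathcal{L}^{q}\subset HK_{loc}$ and \cite[Corollary 1]{Alfredo}, and invoke \cite[Theorem 2]{Talvila2}. Your additional attention to the behaviour at $s=0$ is a welcome elaboration of what the paper leaves implicit.
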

\begin{proof}
Similar arguments as above give the result.
\end{proof}

\bigskip

Now we show some examples. 

\bigskip

\begin{example}\label{example 1}
Let  $\phi (t):=(1+t^2)^{-1}$ on $[0,\infty)$ and zero otherwise. It is easy to see that $\phi$ belongs to $ \mathcal{L}^1(\mathbb{R})\cap \mathcal{L}^2(\mathbb{R})$. Moreover,  $g(t)=t\phi(t)$ belongs to $BV_0(\mathbb{R})\setminus \mathcal{L}^{1}(\mathbb{R})$.  By   Proposition \ref{Propo1} we have
$$\frac{d}{ds}\mathcal{F}_{2}(\phi)(s)=\frac{d}{ds}\mathcal{F}_{1}(\phi)(s)=-i\mathcal{F}_{HK}(g)(s)\quad (s\not= 0). $$

\end{example}

\bigskip


\begin{example}
 Let $\phi(t) = \arctan |t|-\frac{\pi}{2}$. Note that $\phi\in BV_{0} (\mathbb{R})\cap\mathcal{L}^2(\mathbb{R})\setminus \mathcal{L}^1(\mathbb{R})$.  However, $t\phi(t)$ does not belong to $\mathcal{L}^{1}(\mathbb{R})+BV_0(\mathbb{R})$.
 By Corollary \ref{corollary1} we have that
\begin{equation*}
\mathcal{F}_{2}(\phi)(s)=  -\sqrt{\frac{2}{\pi}}\cdot \frac{1}{s}\mathcal{F}_{1}^S(\tau ' )(s),
\end{equation*}
where
$\tau(t)=\phi\cdot \chi_{(0,\infty)}(t)$. 
Note that $\tau'\in \mathcal{L}^1(\mathbb{R})\cap\mathcal{L}^2(\mathbb{R})$, hence $\mathcal{F}_{1}^S(\tau')=\mathcal{F}_{2}^S(\tau')$.  Applying  Proposition \ref{Propo1} to $\mathcal{F}_{2}^S(\tau')$ we have that 
$$\frac{d}{ds}\mathcal{F}_{2}^S(\tau')(s)=\mathcal{F}_{HK}^C(g)(s),$$
here $g(t)=
t\tau ' (t) \in BV_0(\mathbb{R})$.
Thus,  $\mathcal{F}_{2}(\phi)$  is a continuously differentiable function away from zero
and  $$\frac{d}{ds}\mathcal{F}_{2}(\phi)(s)=\sqrt{\frac{2}{\pi}}\left[\frac{1}{s^2}\mathcal{F}_{1}^S(\tau ')(s)-\frac{1}{s}\mathcal{F}_{HK}^C(g)(s)\right], \; \; \text{a.e.}$$


\end{example}

\bigskip

\begin{example}Let $h_{1}(t):= 1- C(2/\pi \arctan(t))$ and $ h_{2}(t):= \sqrt[3]{t}  \sin(1/t)$,  where $ C$ is the Cantor function \cite{Corothers} . We take
$$f(t) = \left\{ \begin{array}{ll} t ^{-1}\cdot h_{1}(t) & \text{if\ }\;\; t>2, \\ h_{2}(t)& \text{if\ }\;\;0<t<1,\\
0& \text{otherwise}. 
\end{array} \right.$$
Due to $ t^{-1}\cdot h_{1}(t)$ belongs to $\mathcal{L}^p([2,\infty))$ for  $p\geq 1$, it follows that $f\in \mathcal{L}^{p}(\mathbb{R})$, see \cite{Russell}. Moreover, 
 $$h_{1} \in BV_{0}([2,\infty))\setminus \mathcal{L}^{1}([2,\infty)).$$ 
In addition, $g(t)=t\cdot f(t)$ is not in $\mathcal{L}^1(\mathbb{R}) $, but in $BV_0(\mathbb{R})+ \mathcal{L}^{1}(\mathbb{R})$. Applying Proposition \ref{Propo1},  we have that for $1\leq p\leq 2,$
\begin{eqnarray*}
\frac{d}{ds}\mathcal{F}_p(f)(s)=-i\mathcal{F}_{HK}(g)(s)=-i[\mathcal{F}_{HK}(g_1)(s)+\mathcal{F}_1(g_2)(s)],\;\; (s\neq 0),
\end{eqnarray*}
where $g_1(t):=h_1\chi_{(2,\infty)}(t)$ and $g_2(t):=t\cdot h_2\chi_{(0,1)}(t)$.
\end{example}




\section{Conclusions} \label{sec:6}


An integral  representation  of the Fourier transform  is obtained on the subspace  $ \mathcal{L}^p(\mathbb{R}) \cap BV_{0}(\mathbb{R})\cap AC_{loc}(\mathbb{R})\setminus \mathcal{L}^1(\mathbb{R})$, for $1<p\leq 2$. This is possible by switching to the Henstock-Kurzweil integral. Furthermore, expressions (\ref{even}) and (\ref{odd}) give explicit formulas of $\mathcal{F}_{p}$  over that subspace.  
 Using our results, specific values of the Fourier transform of  particular functions might be approximated   with arbitrary accuracy. Moreover, it was shown  differentiability of  $\mathcal{F}_{p}(\phi)(s)$ by extending a classical theorem in Lebesgue's theory. This illustrates the applicability of the results obtained, which are original in Fourier Analysis over $\mathcal{L}^p(\mathbb{R})$.
  
  
\section*{Acknowledgements}\label{sec:Acknowledgements}
The first author is supported by GA 20-11846S of the Czech Science Foundation. J.H.A. and M.B. acknowledge  partial support from CONACyT--SNI.

 \bigskip \smallskip

 \it

 \noindent
J. H. Arredondo\\
Departamento de Matem\'aticas,\\ Universidad Aut\'onoma Metropolitana - Iztapalapa\\
Av. San Rafael Atlixco 186, M\'exico City, 09340, M\'exico.\\
e-mail: iva@xanum.uam.mx\\ 
\\[12pt]
M. G. Morales\\
Department of Mathematics and Statistics,\\ 
Faculty of Science, Masaryk University,\\
 Kotl\'a\v{r}sk\'a 2, 611 37 Brno, Czech Republic.\\
e-mail: maciasm@math.muni.cz\\
\\[12pt]
M. Bernal\\
Departamento de Matem\'aticas,\\ Universidad Aut\'onoma Metropolitana - Iztapalapa\\
Av. San Rafael Atlixco 186, M\'exico City, 09340, M\'exico.\\
e-mail: mbg@xanum.uam.mx 
\end{document}